\theoremstyle{plain}
\newtheorem{theorem}{Theorem}
\theoremstyle{plain}
\newtheorem{prop}{Proposition}
\theoremstyle{plain}
\theoremstyle{plain}
\newtheorem{cor}{Corollary}
\theoremstyle{definition}
\newtheorem{defin}{Definition}
\theoremstyle{remark}
\theoremstyle{remark}
\theoremstyle{remark}
\begin{document}

UDK 517.982.27+519.2

\begin{center}
{\bf On unconditionality of fractional Rademacher chaos in symmetric spaces.\\[5mm]}
\end{center}

\begin{center}
{\bf S.V. Astashkin
%\address{Самарский национальный исследовательский университет, Россия; Bahcesehir University, Turkey}
%\email{astash@ssau.ru}
\footnote[1]{The work of the first named author was completed as a part of the implementation of the development program of the Volga Region Scientific and Educational Mathematical Center (agreement no. 075-02-2023-931).}, K.V. Lykov\\[5mm]}
\end{center}

{\bf Key words}: Rademacher functions, Rademacher chaos, symmmetric space, combinatorial dimension, unconditional convergence.

{\bf Abstract}: We study density estimates of an index set $\mathcal{A}$, under which unconditionality (or even a weaker property of the random unconditional divergence) of the corresponding Rademacher fractional chaos $\{r_{j_1}(t)\cdot r_{j_2}(t)\cdot\dots\cdot r_{j_d}(t)\}_{(j_1,j_2,\dots,j_d)\in \mathcal{A}}$ in a symmetric space $X$ implies its equivalence in $X$ to the canonical basis in $\ell_2$. In the special case of Orlicz spaces $L_M$,  unconditionality of this system is also equivalent to the fact that a certain exponential Orlicz space embeds into $L_M$.

%\begin{document}

%\begin{center}
% О функциях $f_n(t)=\frac1n L_{[2^n(1+t)]}.$ 
%\end{center}

%\Large \large
%\pagestyle{empty}
%\begin{center}
%{\bf О безусловности дробного хаоса Радемахера\\ в симметричных пространствах.\\
%Асташкин С.В.\footnote{Самарский национальный исследовательский университет имени академика С. П. Королева, Московское шоссе, 34, 443086 Самара, Россия}, Лыков К.В.
%\footnote{
%Институт систем обработки изображений РАН - филиал Федерального государственного учреждения "Федеральный научно-исследовательский центр "Кристаллография и фотоника"\;Российской академии наук"\,, ул. Молодогвардейская, 151, 443001 Самара, Россия; 
%e-mail: alkv@list.ru}
%}
%\end{center}

\section{Introduction}
\label{Intro}
%Работа продолжает изучение свойств последовательности Радемахера и хаосов, построенных по ней, в симметричных пространствах (см.   \cite{khint,RS,As1998,As2000,As2009,AL_AA}).

As usual, the Rademacher functions are defined as follows: if $0\leqslant t\leqslant 1,$ then 
%{\color{blue}$r_j(t):={\rm sign}(\sin(2^j\pi t))$, $j=1,2,\dots$} 
$$r_j(t):=(-1)^{[2^j t]},\quad j=1,2,\dots,
$$
where $[x]$ denotes the integer part of a real number $x$ (i.e., the most integer, which does not exceed $x$). According to the classical Khintchine inequality \cite{khint} (cf. also \cite{AsBook}), for any $p\geqslant 1$ there exists a constant $C_p$ such that for arbitrary $a_j\in\mathbb{R}$, $j=1,2,\dots$, we have
\begin{equation}
\label{basic}
{\left\|\sum_{j=1}^\infty
a_jr_j\right\|}_{L_p[0,1]}\leqslant C_p\left(\sum_{j=1}^\infty a_j^2\right)^{1/2}.
\end{equation}
It is well known that $C_p\le \sqrt{p}$ (the optimal constants for this inequality have been found by Haagerup in \cite{Haag}). In the opposite direction, in \cite{Szarek}, Szarek proved that for all $p\geqslant 1$ and $a_j\in\mathbb{R}$, $j=1,2,\dots$, it holds
\begin{equation}
\label{basic1}
\frac{1}{\sqrt{2}}\left(\sum_{j=1}^\infty a_j^2\right)^{1/2}\leqslant{\left\|\sum_{j=1}^\infty
a_jr_j\right\|}_{L_p[0,1]}.
\end{equation}

These inequalities have caused an enormous number of investigations and generalizations, have found numerous applications in various fields of  analysis. Recall that Khintchine proved inequality \eqref{basic}, ''by pursuing the goal of finding the 'right' rate of convergence in the strong law of large numbers of Borel'' \cite{PeshShir}. At the same time, from the point of view of the geometry of Banach spaces  inequalities \eqref{basic} and \eqref{basic1} indicate that the spaces $L_p[0,1]$, $1\le p<\infty$, while not being Hilbert for $ p\neq 2$, contain subspaces isomorphic to $\ell_2$. A characterization of  symmetric spaces $X$, in which the sequence $\{r_j\}_{j=1}^\infty$ is equivalent to the canonical basis in $\ell_2$, was obtained by Rodin and Semenov in the paper \cite{RS}. They proved that this equivalence holds if and only if $X$ contains the separable part of the Orlicz space $\mathrm{Exp}L^{2}$, generated by the function $N_2(u)=e^{u^{2}}-1$. In \cite{As1998}, a similar question was studied for the system $\{r_{j_1}(t)\cdot r_{j_2}(t)\}_{j_1>j_2}$ of products of Rademacher functions, which is called usually the second-order Rademacher chaos. Specifically, it was shown that this system is equivalent in $X$ to the canonical basis in $\ell_2$ if and only if $X$ contains the separable part of the Orlicz space $\mathrm{Exp}L$, generated by the function $N_1(u)=e^u-1$. Moreover, both these properties turned out to be equivalent to the formally weaker (than the equivalence to the canonical basis in $\ell_2$) property of unconditionality of the basic sequence $\{r_{j_1}(t)\cdot r_{j_2}(t)\}_{j_1>j_2}$ in $X$ \cite{As2000}. Note that the Rademacher system itself is an unconditional (and even symmetric with constant 1) basic sequence in any symmetric space (see e.g. \cite[Proposition~2.2]{AsBook}). The next step in the study of the behaviour of the Rademacher chaos in symmetric spaces was made by the authors of this paper by using the important concept of combinatorial dimension developed earlier by Blei (see \cite{Blei79,Blei84,Blei-Korner,Blei,BleiJanson}) . Namely, in \cite{AL_AA}, it was shown that the above results in \cite{As1998} and \cite{As2000} can be extended to a non-complete chaos $\{r_{j_1}(t)\cdot r_{j_2}(t)\cdot\dots\cdot r_{j_d}(t)\}_{(j_1,j_2,\dots,j_d)\in \mathcal{A}}$ whenever combinatorial dimension  of the corresponding index set $\mathcal{A}\subset \mathbb{N}^d$ is equal to $d$.

%{\color{blue} Основная цель настоящей работы состоит в том, чтобы выяснить, при каких условиях на индексное множество и симметричное пространство $X$  дробный (в смысле понятия комбинаторной размерности) хаос Радемахера является безусловной (или эквивалентной каноническому базису в $\ell_2$) базисной последовательностью в $X$.}
The main aim of this paper is to determine conditions on an index set $\mathcal{A}$, under which unconditionality of the system $\{r_{j_1}(t)\cdot r_{j_2}(t)\cdot\dots\cdot r_{j_d}(t)\}_{(j_1,j_2,\dots,j_d)\in \mathcal{A}}$ in a symmetric space $X$ guaranties its equivalence in $X$ to the canonical basis in $\ell_2$. In particular, bearing in mind the above-mentioned specifics in the behaviour of the chaos in comparison with the Rademacher system itself, we investigate quantitative connections between properties of a subsystem of the Rademacher chaos and combinatorial dimension of the corresponding index set. To achieve this goal, we slightly modify the notion of combinatorial dimension, using one-sided density estimates for an index set $\mathcal{A}$, which allows us to significantly expand the scope of estimates of the form \eqref{basic}. 

It is worth to noting the following new effect that appeared in this paper. As Theorem~\ref{RUDgen} shows, certain density estimates of an index set guarantee that "remoteness"\;of a symmetric space $X$ from the "extreme"\;space $L_\infty$ is a consequence even of the so-called random unconditional divergence (RUD) property of the system $\{r_{j_1}(t)\cdot r_{j_2}(t)\cdot\dots\cdot r_{j_d}(t)\}_{(j_1,j_2 ,\dots,j_d)\in \mathcal{A}}$ in $X$, which is essentially weaker than unconditionality. Thus, in this case such a system possesses the RUD property  in a symmetric space $X$ if and only if it is equivalent in $X$ to the canonical basis in $\ell_2$ (see Theorem \ref{FractChaos}). %Важно отметить один новый эффект, проявившийся в этой работе. Как показывает теорема~\ref{RUDgen}, определенные плотностные оценки индексного множества гарантируют, что "удаленность"\;симметричного пространства $X$ от "крайнего"\;пространства $L_\infty$ является следствием даже более слабого, нежели безусловность, так называемого RUD свойства (свойства случайной безусловной расходимости) системы $\{r_{j_1}(t)\cdot r_{j_2}(t)\cdot\dots\cdot r_{j_d}(t)\}_{(j_1,j_2,\dots,j_d)\in \mathcal{A}}$ в $X$. Тем самым в этом случае RUD свойство такой системы оказывается равносильным ее эквивалентности в симметричном пространстве $X$ каноническому базису в $\ell_2$ (см. теорему \ref{FractChaos}).
% {\color{teal} В частности, отмеченный эффект будет иметь место для любого множества $\mathcal{A}$ точной комбинаторной размерности $\alpha>1$.}
% вытекает, что $X$ расположено достаточно "далеко"\:от "крайнего"\;симметричного пространства $L_\infty$ (теорема~\ref{RUDgen}). Тем самым это свойство
%оказывается равносильным эквивалентности системы $\{r_{j_1}(t)\cdot r_{j_2}(t)\cdot\dots\cdot r_{j_d}(t)\}_{(j_1,j_2,\dots,j_d)\in \mathcal{A}}$ в симметричном пространстве $X$ каноническому базису в $\ell_2$ (см. теорему \ref{FractChaos}). 

%  равносильна даже  более слабому, нежели безусловность, так называемому свойству случайной безусловной расходимости (см. теоремы~\ref{RUDgen} и \ref{FractChaos}). 
%{\color{blue}В частном случае, для пространств Орлича, такие же результаты справедливы при несколько более слабых условиях на плотность индексного множества $\mathcal{A}$ (теорема~\ref{FractChaosOrl}). Кроме того, эти условия, как и ранее, могут быть охарактеризована в терминах вложений.} 
In the special case of Orlicz spaces $L_M$ basic properties of a system $\{r_{j_1}(t)\cdot r_{j_2}(t)\cdot\dots\cdot r_{j_d}(t)\}_ {(j_1,j_2,\dots,j_d)\in \mathcal{A}}$ can be characterized also, as well as in \cite{RS,As1998,AL_AA}, in terms of continuous embeddings of certain exponential Orlicz spaces into $L_M$ (Theorem~\ref{FractChaosOrl}). Note that somewhat close results to Theorem~\ref{FractChaosOrl} were obtained earlier by Blei and Ge \cite{Blei-Ge1,Blei-Ge2}. Instead of issues associated with unconditionality of a system, they make use of a more detailed analysis of combinatorial dimension of the corresponding index set.

In the concluding part of the paper, we show that every uniformly bounded Bessel system (in particular, any Rademacher chaos) in a symmetric space $X$, satisfying the embedding $\mathrm{Exp}L^{2}\subset X$, has the random unconditional convergence property, which is in a certain sense opposite to the RUD property. In addition, we give here a concrete example illustrating the interesting fact of "divergence"\:of the moment estimates of a Rademacher fractional chaos and its asymptotic behaviour (see also \cite{BleiJanson}).

\section{Preliminaries}

In what follows, any embedding of one Banach space into another is understood as continuous, i.e., $X_1\subset X_0$ means that from $x\in X_1$ it follows $x\in X_0$ and ${\|x\|}_{X_0}\leqslant C{\|x\|}_{X_1 }$ for some $C>0$. If the specific embedding constant $C$ is important,  we will additionally write $X_1\overset{C}{\subset} X_0$. An expression of the form $F_1\asymp F_2$ means that $cF_1\leqslant F_2\leqslant CF_1$ for some constants $c>0$ and $C>0$, which do not depend on all or a part of the arguments of $F_1$ and $ F_2$, and it should be clear from the context which arguments are involved.

By $|\cdot|$ we denote further the modulus of a number or a function, as well as cardinality of a set, depending on the context.

\subsection{Symmetric spaces}
\label{Symm}
%Мы будем рассматривать вещественные функции, заданные на отрезке $[0,1]$ c обычной мерой Лебега $\mu$, и отождествлять две такие функции, если они равны почти всюду (п.в.). Через $|\cdot|$ обозначаем модуль числа или мощность множества, в зависимости от контекста. 

A detailed exposition of the theory of symmetric spaces can be found in the monographs \cite{KPS,LT,BS}.

Let $\mathcal{S}$ be the set of (equivalence classes of) measurable almost everywhere finite real-valued functions on $[0,1]$ with the usual Lebesgue measure $\mu$. The {\it distribution function} of a function $x=x(t)\in\mathcal{S}$ is defined as follows: 
$$
n_x(\tau):=\mu \{t:\;x(t)>\tau\},\;\tau\in\mathbb{R}.
$$ 
We will say that two functions $x$ and $y$ are {\it equidistributed} if their distribution functions coincide, while these functions will be called {\it equimeasurable} if the functions $|x|$ and $|y|$ are equidistributed.

For any function $x=x(t)\in\mathcal{S}$ there exists a unique decreasing, left-continuous, non-negative function $x^*=x^*(t)$ on $[0,1]$, equimeasurable with $x$, which is referred as the {\it rearrangement} of $x$. It can be explicitly specified by the following formula \cite[p.~59]{KPS}: 
$$
x^*(t)=\inf\{\tau:\;n_{|x|}(\tau)<t\}.
$$

%Напомним определение некоторых классов банаховых пространств измеримых функций.

\begin{defin} A Banach space $X$, $X\subset \mathcal{S}$, is said to be {\it ideal} if the conditions $x\in X$, $y\in \mathcal S$ and $|y|\leqslant|x|$ imply $y\in X$ and ${\|y\|}_X\leqslant{\|x\|}_X$. A Banach ideal space $X$ is said to be {\it symmetric} if from the conditions $x\in X$, $y\in \mathcal S$ and $y^*=x^*$ it follows that $y\in X$ and ${\|y\|}_X={\|x\|}_X$.
\end{defin}

It follows from the definition that a symmetric space, along with each function $x$, also contains all the functions that are equimeasurable with $x$. 

%Примерами симметричных пространств могут служить пространства $L_p$, пространства Лоренца, Марцинкевича и Орлича.
Let us give now some examples of symmetric spaces on $[0,1]$.

As usual, the space $L_p=L_p[0,1]$, $1\leqslant p<\infty$, consists of all functions $x\in \mathcal S$, for which 
$$
{\|x\|}_p:=\left(\int\limits_0^1|x(t)|^p\,dt\right)^{{1}/{p}}<\infty.
$$
For $p>q$, we have $L_p\overset{1}{\subset} L_q$. In the limiting case as $p\to\infty$ we obtain the space $L_\infty$ with the norm
$$
{\|x\|}_\infty:=\mathrm{ess}\sup_{t\in[0,1]}|x(t)|=\inf\left\{C:\,\mu\{t\in [0,1]:\,|x(t)|>C\}=0\right\}.
$$

%Еще один важный класс симметричных пространств --- класс {\it прост\-ранств Орлича}. 
A natural generalization of the $L_p$-spaces are the Orlicz spaces. Let $M=M(u)$ be an {\it Orlicz function}, i.e., convex, non-negative function on $[0,\infty)$, not identically equal to zero, $M(0)=0$. 
%(близкое понятие --- $N$-функция). 
The {\it Orlicz space} $L_M$ consists of all functions $x\in \mathcal S$, for which there exists $\lambda>0$ such that
$$
\int\limits_0^1 M\left(\frac{|x(t)|}{\lambda }\right)\,dt<\infty.
$$
%для некоторого $\lambda $. 
The norm in $L_M$ is defined by
$$
{\|x \|} _{L_M}:=\inf\left\{\lambda>0 :\;\int\limits_0^1 M\left(\frac{|x(t)|}{\lambda }\right)\,dt\leqslant 1\right\}.
$$
In particular, $L_{M_p}=L_p$ isometrically if $M_p(u)=u^p$. By $\mathrm{Exp}L^{r}$, $r>0$, we will denote the exponential Orlicz space, generated by an Orlicz function $N_r(u)$ such that, for some $u_0>0$, we have $\log N_r(u)\asymp u^{r}$ if $u>u_0$.
%Пространствa $L_p$ представляют собой частный случай пространств Орлича. В роли соответствующих функций Орлича в данном случае выступают функции $M_p(u)=u^p$.

Further, we will use repeatedly the following extrapolation description of exponential Orlicz spaces $\mathrm{Exp}L^{r}$ (see \cite[formulas (2)--(4)]{MO}, \cite[p.~83]{JM92} or \cite[Lemma X.18]{Blei}):
\begin{equation}
\label{description}
{\|x\|}_{{\rm Exp} L^{r}}\asymp \sup_{p\geqslant 1}\frac{\|x\|_p}{p^{1/r}}.
\end{equation}
For more details related to the Orlicz spaces see, for instance, the book \cite{KrasR}.

Let $\varphi$ be a continuous, increasing, concave function on $[0,1]$, $\varphi(0)=0$. The {\it Lorentz space} $\Lambda(\varphi)$ (resp. {\it Marcinkiewicz space} $\mathcal{M}(\varphi)$) consists of all functions $x\in\mathcal{S}$ such that 
$$
{\|x\|}_{\Lambda(\varphi)}:=\int\limits_0^1x^*(t)\,d\varphi(t)<\infty
$$
(resp. 
$$
{\|x\|}_{\mathcal{M}(\varphi)}:=\sup_{t\in(0,1]}\frac{\varphi(t)}{t}\int\limits_0^tx^*(s)\,ds<\infty).
$$

%Через $\chi_A=\chi_A(t)$ обозначим характеристическую функцию (индикатор) измеримого множества $A\subset[0,1]$. 
For any symmetric space $X$ on $[0,1]$ we have $L_\infty\subset X\subset L_1$ \cite[Theorem~II.4.1]{KPS}. The closure of $L_\infty$ in a symmetric space $X$ is referred as the {\it separable part} of $X$ and denoted by $X^\circ$. Note that $X^\circ$ is a symmetric space that is separable whenever $X\ne L_\infty$.

An important characteristic of a symmetric space $X$ is its {\it fundamental function} $\phi_X$ defined by
$$
\phi_X(t):={\|\chi_{(0,t)}\|}_X,\quad t\in[0,1].
$$
Throughout the paper, $\chi_A$ is the characteristic function (indicator) of a set $A\subset[0,1]$.
The fundamental function of a symmetric space is quasiconcave (i.e., $\phi_X(t)$ is increasing, $\phi_X(t)/t$ is decreasing and $\phi_X(0)=0$). Recall also that any quasiconcave function is equivalent to its smallest concave majorant (in the sense of the relation $\asymp$ defined above) \cite[see Corollary after Theorem~II.1.1]{KPS}. In particular,
$$
\phi_{\mathcal{M}(\varphi)}(t)=\phi_{\Lambda(\varphi)}(t)=\varphi(t),\quad\phi _{L_M}(t)= \frac{1}{M^{-1}\left(\frac{1}{t}\right)}.
$$

Note that, under certain conditions, the Orlicz and Marcinkiewicz spaces can coincide. Namely, $L_M=\mathcal{M}(\varphi)$ if and only if 
%(1) фундаментальные функции этих пространств эквивалентны (для любого пространства Орлича существует пространство Марцинкевича с эквивалентной фундаментальной функцией и наоборот); (2) $1/\varphi\in L_{M}$.
%Равносильно:
\begin{equation}\label{RavFundFunk}
\varphi(t)\asymp\frac{1}{M^{-1}(1/t)}
\end{equation}
and
\begin{equation}\label{RavProstranstv}
\int\limits_0^1M\left(\frac{\varepsilon}{\varphi(t)}\right)\,dt<\infty\quad\mbox{for some }\varepsilon>0
\end{equation}
(see \cite{Lorentz,Rutickiy}).
%Совпадение указанных пространств имеет место тогда и только тогда, когда выполнены следующие два условия. Первое условие означает эквивалентность фундаментальных функций пространств $L_M$ и $\mathcal{M}(\varphi)$ (для любого пространства Орлича существует пространство Марцинкевича с эквивалентной фундаментальной функцией, и обратно):
%\begin{equation}\label{RavFundFunk}
%\varphi(t)\asymp\frac{1}{M^{-1}(1/t)}.
%\end{equation}
%Второе условие можно записать следующим образом:
%\begin{equation}\label{RavProstranstv}
%\int\limits_0^1M\left(\frac{\varepsilon}{\varphi(t)}\right)\,dt<\infty\quad\mbox{для некоторого }\varepsilon>0,
%\end{equation}
%или, равносильно, $1/\varphi\in L_{M}$.

The Lorentz space $\Lambda(\varphi)$ has the following extremal property in the class of symmetric spaces: if $\phi_X(t)\leqslant C\varphi(t)$ for some $C>0$ and all $t\in[0,1]$, then $\Lambda(\varphi)\subset X$ (see \cite[Theorem~II.5.5]{KPS}). In particular, the Lorentz space $\Lambda(\varphi)$ is the narrowest among all symmetric spaces with the fundamental function $\varphi(t)$. The widest in the same class is the Marcinkiewicz space $\mathcal{M}(\varphi)$ \cite[Theorem~II.5.7]{KPS}. Thus, if a symmetric space $X$ is such that $\phi_X=\varphi$, then the following continuous embeddings are valid:
\begin{equation}\label{VlLorXMarc}
\Lambda(\varphi)\subset X\subset\mathcal{M}(\varphi).
\end{equation}

%Через $r_j$ обозначаем непрерывную справа функцию Радемахера:
%$$
%r_j(t):=(-1)^{\lfloor2^jt\rfloor},\quad t\in[0,1].
%$$

\subsection{Combinatorial dimension and $(\alpha,\beta)$-sets}

Based on the previously introduced concept of the fractional Cartesian product \cite{Blei79}, Blei came to the following definition of combinatorial dimension of a set (see \cite{Blei84}, as well as the monograph \cite[Chapter~XIII]{Blei}, which contains many interesting  applications of this notion). 

Let $d\in \mathbb{N}$ and $\mathbb{N}^d:=\mathbb{N}\times\mathbb{N}\times\ldots\times\mathbb{N}$ ($d$ factors), where $\mathbb{N}$ is the set of positive integers.

\begin{defin}\label{Dimension2}
A set $\mathcal{A}\subset\mathbb{N}^d$ is said to have {\it  combinatorial dimension $\alpha$} if \\
1) for an arbitrary $\beta>\alpha$ there exists $C_\beta>0$ such that for any $n\in\mathbb{N}$ and every collection of sets 
$B_1,B_2,\ldots,B_d\subset\mathbb{N}$, $|B_1|=|B_2|=\ldots=|B_d|=n$, we have
$$
|\mathcal{A}\cap (B_1\times B_2\times\ldots\times B_d)|<C_\beta n^\beta;
$$
2) for an arbitrary $\gamma<\alpha$ and $k\in\mathbb{N}$ there are $n>k$ and sets $B_1,B_2,\ldots,B_d\subset\mathbb{N}$, $|B_1|=|B_2|=\ldots=|B_d|=n$, such that
$$
|\mathcal{A}\cap (B_1\times B_2\times\ldots\times B_d)|> n^\gamma.
$$
\end{defin}
It is known that for each real $\alpha\in[1,d]$ there exists a set of combinatorial dimension $\alpha$ (see \cite{Blei-Korner} or \cite[Chapter~XIII]{Blei}).

Observe that Definition \ref{Dimension2} contains some asymmetry between the lower and upper density estimates for the set $\mathcal{A}$. We will use here the following modification of this definition, in which these estimates are considered separately.

\begin{defin}\label{a-set} Let $\mathcal{A}\subset\mathbb{N}^d$, $\alpha\geqslant 1$. We will say that a set $\mathcal{A}$ is a {\it super-$\alpha$-set} whenever the following condition holds: for some  $c_{\mathcal{A}}>0$ and each $n\in\mathbb{N}$ there are sets $B_1,B_2,\ldots, B_d$ such that $|B_j|=n$, $j=1,2,\ldots,d$, and
$$
|\mathcal{A}\cap (B_1\times B_2\times\ldots \times B_d)|\geqslant c_{\mathcal{A}}n^\alpha.
$$
\end{defin}

Let us emphasize that, in contrast to the part 2) of Definition \ref{Dimension2}, in Definition \ref{a-set} sets $B_1,B_2,\ldots, B_d$, satisfying the lower density estimate, exist for {\it each} positive integer $n$.

%{\it Множество $B_1\times B_2\times\ldots \times B_d$ из определения \ref{a-set}, соответствующее числу $n\in\mathbb{N}$, будем обозначать далее как $\mathcal{B}_n$.}

\begin{defin}\label{b-set} Let $\mathcal{A}\subset\mathbb{N}^d$, $\beta\leqslant d$. We will say that $\mathcal{A}$ is a {\it sub-$\beta$-set} if the following condition holds: for some $C_{\mathcal{A}}>0$, each $n\in\mathbb{N}$ and all sets $B_1,B_2,\ldots, B_d$, $|B_j|=n$, $j=1,2,\ldots,d$, we have
$$
|\mathcal{A}\cap (B_1\times B_2\times\ldots \times B_d)|\leqslant C_{\mathcal{A}}n^\beta.
$$
\end{defin}

\begin{defin}\label{ab-set} A set $\mathcal{A}\subset\mathbb{N}^d$,  which is both a super-$\alpha$-set and a sub-$\beta$-set, will be called {\it $(\alpha,\beta) $-set}.
\end{defin}

Let us list some immediate consequences of the introduced definitions.
If $\mathcal{A}$ is a $(\alpha,\beta)$-set, then $\alpha\leqslant\beta$. Any super-$\alpha$-set is a $(\alpha,d)$-set.
Each $(\alpha,\alpha)$-set $\mathcal{A}$ has combinatorial dimension $\alpha$; we will say that such a set has {\it exact} combinatorial dimension $\alpha$.
Note also that for any $1\le\alpha<\beta\le d$ there exists a $(\alpha,\beta)$-set that is not a $(\alpha',\beta')$-set whenever at least one of the inequalities $\alpha<\alpha'$ or $\beta>\beta'$ holds \cite[Theorem XIII.19]{Blei}.

\subsection{Systems of random unconditional convergence and divergence in a Banach space}
\label{rud}

Recall that a sequence $\{x_k\}_{k=1}^\infty$ of elements of a Banach space $X$ is called {\it basic} if it is a basis in its closed linear span. In the case when $\{x_{\pi(k)}\}_{k=1}^\infty$
is a basic sequence for any bijection $\pi:\,\mathbb{N}\to\mathbb{N}$,  $\{x_k\}_{k=1}^\infty$ is said to be a {\it unconditional basic sequence}. It is well known that a basic sequence $\{x_k\}_{k=1}^\infty$ in a Banach space $X$ is unconditional in $X$ if and only if there exists $D>0$ such that for any $n\in\mathbb{N}$, arbitrary collection of signs $\{\theta_k\}_{k=1} ^n$, $\theta_k=\pm1$, and all $a_k\in\mathbb{R}$ it holds
$$
{\left\|\sum_{k=1}^n\theta_ka_kx_k\right\|}_X\leqslant D{\left\|\sum_{k=1}^na_kx_k\right\|}_X.
$$
A detailed information about properties of basic and unconditional basic sequences can be found, for instance, in the books  \cite{AK,KashinSaakyan,Bra}.

Each of the next notions is a natural weakening of that of an unconditional basic sequence.
%определения безусловной базисной последовательности. 

\begin{defin} 
%(см. \cite{LAT}) 
A basic sequence $\{x_k\}_{k=1}^\infty$ in a Banach space $X$ is called a {\it system of random unconditional convergence with constant $D$} (in brief, $D$-RUC) (resp. {\it system of random unconditional divergence with constant $D$} (in brief, $D$-RUD)), where $D>0$, if for any $n\in\mathbb{N}$ and $a_k\in\mathbb{R}$, $k=1,2,\ldots,n$, we have
$$
\int\limits_0^1{\left\|\sum_{k=1}^nr_k(u)a_kx_k\right\|}_X\,du \leqslant D{\left\|\sum_{k=1}^na_kx_k\right\|}_X
$$
(resp.
$$
{\left\|\sum_{k=1}^na_kx_k\right\|}_X\leqslant D\int\limits_0^1{\left\|\sum_{k=1}^nr_k(u)a_kx_k\right\|}_X\,du).
$$
Moreover, {\it RUC} (resp. {\it RUD) system} is a $D$-RUC (resp. $D$-RUD) system with some constant $D$, the exact value of which is not important for us.
\end{defin}

The concept of a RUC system was introduced in the paper \cite{BKPS}, where also many important properties of such systems were proved. Subsequently, the behaviour of RUC and RUD systems in various function spaces was intensively studied by many authors (see e.g. \cite{wojtaszczyk86, garling-tomczak-jaegermann, dodds-semenov-sukochev, LAT, astashkin-curbera-tikhomirov, astashkin-curbera}).

It is clear that a basic sequence is unconditional in a Banach space if and only if it is both a RUC and a RUD sequence there (see also \cite[Proposition~2.3]{LAT}). Moreover, it follows easily from the definitions that a basic sequence is $1$-RUC (resp. $1$-RUD) if and only if it is $1$-unconditional \cite[Propositions~2.7 and 2.8]{LAT}.
%{\color{blue} Далее для каждого $d\in\mathbb{N}$ всюду полагаем $\jmath:=(j_1,j_2,\ldots,j_d)$ и $\mathbf{r}_\jmath(t):=r_{j_1}(t)\cdot r_{j_2}(t)\cdot\ldots\cdot r_{j_d}(t)$, a через $\{r_\jmath\}$ будем обозначать обычную последовательность функций Радемахера (см. \S\,\ref{Intro}), пронумерованную в некотором (фиксированном) порядке индексами $\jmath=(j_1,j_2,\ldots,j_d)\in \mathbb{N}^d$.}

Let $d\in\mathbb{N}$. By $\bigtriangleup^d$ we will denote the 'lower triangular' subset of the set $\mathbb{N}^d$, that is,
$$
\bigtriangleup^d:=\{(j_1,j_2,\ldots,j_d)\in\mathbb{N}^d:\;j_1>j_2>\ldots>j_d\}.
$$
Throughout the paper, by symbols $\jmath$ we denote multi-indices $(j_1,j_2,\ldots,j_d)\in \bigtriangleup^d$.  Furthermore, $\{ r_\jmath\}_{\jmath\in \bigtriangleup^d}$ is the Rademacher sequence, numbered in some (fixed) order by multi-indices $\jmath\in \bigtriangleup^d$, while $\mathbf{r}_\jmath(t):=r_{j_1}(t)\cdot r_{j_2}(t)\cdot\ldots\cdot r_{j_d}(t)$, $\jmath=(j_1,j_2,\ldots,j_d)\in \bigtriangleup^d$, where $r_j$, $j=1,2,\dots$, are the Rademacher functions, numbered as usual by positive integers (see \S\,\ref{Intro}).
%the functions $\mathbf{r}_\jmath(t):=r_{j_1}(t)\cdot r_{j_2}(t)\cdot\ldots\cdot r_{j_d}(t)$, numbered in some (fixed) order of indices $\jmath=(j_1,j_2,\ldots,j_d)\in \bigtriangleup^d$, while $\{ r_\jmath\}$ is the usual sequence of Rademacher functions (see \S\,\ref{Intro}). 
Observe that the system $\{\mathbf{r}_\jmath\}_{\jmath\in \bigtriangleup^d}$, considered in the lexicographic order of $\jmath\in \bigtriangleup^d$, is basic in any symmetric space $X$ (see \cite[Theorem 2]{AL_AA}). However, in this paper, the numbering order of the system $\{\mathbf{r}_\jmath\}_{\jmath\in \bigtriangleup^d}$ will do not matter.

%\section{Об одном свойстве дробного хаоса Радемахера, являющегося RUD системой}
\section{Main results}

Our first result, which plays a key role in this paper, shows that, under certain non-restrictive conditions on the density characteristics of an index set, the RUD property of the corresponding subsystem of the Rademacher chaos in a symmetric space $X$ ensures  that $X$ is located sufficiently "far"\:from the space $L_\infty$.  

\begin{theorem}\label{RUDgen} Let $X$ be a symmetric space, $d\in\mathbb{N}$, $\alpha,\beta,b\in\mathbb{R}$, $1\le \alpha,\beta,b\le d$, $\alpha+b/\beta>b+1$. Assume also that 
%{\color{blue} $\mathcal{A}\subset\mathbb{N}^d$} 
$\mathcal{A}\subset\bigtriangleup^d$ is a $(\alpha,\beta)$-set such that for some $D>0$ and any finite set $\mathcal{A}'\subset \mathcal{A}$ the following inequality holds
\begin{equation}\label{eq4}
\left\|\sum_{\jmath\in \mathcal{A}'}\mathbf{r}_\jmath\right\|_X\leqslant D\int\limits_0^1 {\left\|\sum_{\jmath\in \mathcal{A}'}r_\jmath(u) \mathbf{r}_\jmath\right\|}_X\,du.
\end{equation}
Then, $X\supset \mathrm{Exp}L^{2/b}$.

In particular, the assertion is true if $\mathcal{A}$ is a $(\alpha-\varepsilon,\alpha+\varepsilon)$-set for some $\alpha>b$ and sufficiently small $\varepsilon>0$, and the system $\{\mathbf{r}_\jmath\}_{\jmath\in\mathcal{A}}$ is a RUD sequence in $X$.
\end{theorem}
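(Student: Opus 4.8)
The plan is to derive the embedding $\mathrm{Exp}L^{2/b}\subset X$ by using the extrapolation description \eqref{description}, i.e. by showing $\|x\|_{\mathrm{Exp}L^{2/b}}\asymp\sup_{p\ge 1}\|x\|_p/p^{b/2}$, and then proving that for a suitable family of finite ''test'' sums built from the chaos system the $X$-norm dominates the corresponding $\ell_2$-norm with a uniform constant, after which a density/approximation argument upgrades this to the full embedding. Concretely, I would first fix $n$ and, using that $\mathcal{A}$ is a super-$\alpha$-set, choose blocks $B_1,\dots,B_d$ with $|B_j|=n$ and $|\mathcal{A}\cap(B_1\times\dots\times B_d)|\ge c_{\mathcal{A}}n^\alpha$. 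Apply hypothesis \eqref{eq4} to the finite set $\mathcal{A}'=\mathcal{A}\cap(B_1\times\dots\times B_d)$: the left side is $\|\sum_{\jmath\in\mathcal{A}'}\mathbf{r}_\jmath\|_X$, and since on the support the function $\sum_{\jmath\in\mathcal{A}'}\mathbf{r}_\jmath$ has a large $L_2$-mass (equal to $|\mathcal{A}'|^{1/2}\ge c_{\mathcal{A}}^{1/2}n^{\alpha/2}$) one lower-bounds $\|\cdot\|_X$ from below; in fact the natural move is to observe that $\sum_{\jmath\in\mathcal{A}'}\mathbf{r}_\jmath$ is, up to constants, bounded below in any $X$ by $\phi_X$ evaluated at the measure of a set where it is large, combined with a distributional estimate for chaos polynomials.

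Next I would estimate the right-hand side of \eqref{eq4} from above. Here the randomized sum $\sum_{\jmath\in\mathcal{A}'}r_\jmath(u)\mathbf{r}_\jmath$ is, for fixed $u$, a Rademacher chaos of order $d$ with $|\mathcal{A}'|\le C_{\mathcal{A}}n^\beta$ terms (using now that $\mathcal{A}$ is a sub-$\beta$-set), and one controls $\int_0^1\|\cdot\|_X\,du$ by passing to $L_p$-norms and invoking the hypercontractivity / Bonami-type estimate $\|P\|_p\le(p-1)^{d/2}\|P\|_2$ for degree-$d$ chaos, together with the bound $\|x\|_X\le C\,p^{?}\|x\|_p$-type comparison coming from $X\subset\mathcal M(\phi_X)$ and the growth of $\phi_X$. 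The upshot of combining the two sides is a scalar inequality of the form $n^{\alpha/2}\cdot(\text{something in }\phi_X\text{ at scale }n^{-\beta})\lesssim n^{\beta/2}$, i.e. a lower bound on $\phi_X(n^{-\beta})$ forcing $\phi_X$ to decay no faster than a power dictated by the gap $\alpha+b/\beta-b-1>0$; unwinding this with the relation $\phi_{\mathrm{Exp}L^{2/b}}(t)\asymp(\log(e/t))^{-b/2}$ gives exactly the threshold $2/b$, and then the extremal embedding $\Lambda(\varphi)\subset X$ (Theorem II.5.5 of \cite{KPS}) upgrades the fundamental-function estimate to the claimed space inclusion — for this last step one checks that $\mathrm{Exp}L^{2/b}$ is, up to equivalence, the Marcinkiewicz space with the relevant fundamental function on the relevant scale, or alternatively works directly with $L_p$-norm comparison via \eqref{description}.

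I expect the main obstacle to be the quantitative bookkeeping in the second step: one must choose the exponent $p=p(n)$ optimally (roughly $p\sim\log n$) so that the hypercontractive factor $(p-1)^{d/2}$, the number of terms $n^\beta$, and the norm-comparison factor between $\|\cdot\|_X$ and $\|\cdot\|_p$ all balance against the lower bound $c_{\mathcal{A}}^{1/2}n^{\alpha/2}$ from the left side, and it is precisely the arithmetic of these competing powers that produces the precise condition $\alpha+b/\beta>b+1$ and the target index $2/b$. A secondary subtlety is the lower estimate of $\|\sum_{\jmath\in\mathcal{A}'}\mathbf{r}_\jmath\|_X$: one needs a Paley–Zygmund-type statement for chaos to guarantee that this polynomial is comparable to $|\mathcal{A}'|^{1/2}$ on a set of measure bounded below independently of $n$, so that $\phi_X$ is genuinely probed at a fixed (not vanishing) scale there; this is standard for the Rademacher chaos but must be cited or reproved with the right uniformity. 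Finally, the ''in particular'' clause is immediate: an $(\alpha-\varepsilon,\alpha+\varepsilon)$-set with $\alpha>b$ satisfies $(\alpha-\varepsilon)+b/(\alpha+\varepsilon)>b+1$ for $\varepsilon$ small since at $\varepsilon=0$ the left side equals $\alpha+b/\alpha>b+1$ (as $\alpha>b\ge 1$ forces $(\alpha-1)(1-b/\alpha)>0$), and the RUD property of $\{\mathbf{r}_\jmath\}_{\jmath\in\mathcal{A}}$ is exactly \eqref{eq4} restricted to initial segments, which by the unconditional-structure of finite sums suffices for arbitrary finite $\mathcal{A}'$.
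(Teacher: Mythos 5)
Your overall scaffolding (test hypothesis \eqref{eq4} on $\mathcal{A}'=\mathcal{A}\cap(B_1\times\dots\times B_d)$, lower-bound the left side through $\phi_X$, upper-bound the randomized side, then convert a decay estimate for $\phi_X$ into an embedding via $\Lambda(\varphi)\subset X$) matches the paper's, but three of your concrete steps would fail. First, the lower bound: a Paley--Zygmund argument shows the chaos polynomial is $\gtrsim|\mathcal{A}'|^{1/2}$ on a set of measure bounded below \emph{independently of} $n$, so it probes $\phi_X$ only at a fixed scale and yields no information whatsoever about the decay of $\phi_X(t)$ as $t\to0$. The paper instead uses the elementary but essential observation that on the set of measure $2^{-dn}$ where all the underlying Rademacher functions equal $1$ the sum takes the value $|\mathcal{A}'|=n^{\delta}$, giving $\bigl\|\sum_{\jmath}\mathbf{r}_\jmath\bigr\|_X\geqslant n^{\delta}\phi_X(2^{-dn})$; it is this exponentially small scale $2^{-dn}$ (not your $n^{-\beta}$) that produces a logarithmic bound $\phi_X(t)\lesssim\log^{-\gamma}(e/t)$ matching the fundamental function of $\mathrm{Exp}L^{2/b}$. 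Second, your upper bound for $\int_0^1\|\cdot\|_X\,du$ rests on a comparison of the form $\|x\|_X\leqslant Cp^{?}\|x\|_p$, which does not exist for a general symmetric space; the inclusion $X\subset\mathcal{M}(\phi_X)$ you invoke gives the \emph{reverse} inequality. The only a priori usable bound is through $L_\infty\subset X$, and the paper obtains it by combining Bernstein's inequality in the $u$-variable with a union bound over the at most $2^{dn}$ sign patterns of $\{\mathbf{r}_\jmath(t)\}$, yielding $\int_0^1\bigl\|\sum_{\jmath}r_\jmath(u)\mathbf{r}_\jmath\bigr\|_\infty\,du\leqslant Cn^{(\delta+1)/2}$.

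Third, and most importantly, a single pass of this argument only gives $\phi_X(t)\lesssim\log^{-\gamma_0}(e/t)$ with $\gamma_0=(\alpha_0-1)/2$, which reaches the target exponent $b/2$ only when $\alpha>b+1$ --- strictly stronger than the hypothesis $\alpha+b/\beta>b+1$. The heart of the paper's proof, absent from your sketch, is a bootstrap: once $\Lambda(\log^{-\gamma_k}(e/t))\subset X$ is established, the randomized sum is re-estimated in that Lorentz norm rather than in $L_\infty$ (its rearrangement being controlled by $Cn^{\delta/2}\min\{n^{1/2},\log^{\beta/2}(e/t)\}$ via Blei's moment inequality $\|\cdot\|_p\leqslant Cp^{\beta/2}n^{\delta/2}$, i.e. the sub-$\beta$ hypothesis), which improves the exponent to $\gamma_{k+1}=\gamma_0+\gamma_k/\beta$. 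The recursion converges to $\beta\gamma_0/(\beta-1)$, and this limit exceeds $b/2$ precisely when $\alpha_0+b/\beta>b+1$; that is where the stated hypothesis is actually used. Your ``in particular'' paragraph, by contrast, is correct as written.
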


\begin{proof} Observe first that the functions $\varphi(t)=\log^{-b/2}(e/t)$ and $M(u)=\exp(u^{2/b})-1$ satisfy conditions  \eqref{RavFundFunk} and \eqref{RavProstranstv}. Therefore,  
$\mathrm{Exp}L^{2/b}= \mathcal{M}(\log^{-b/2}(e/t))$. Since for any $\gamma>{b}/{2}$ the space $\mathcal{M}(\log^{-b/2}(e/t))$ is continuously embedded into the Lorentz space $\Lambda(\log^{-\gamma}(e/t))$ (see \cite[Corollary~1]{AL_AA}), the theorem will be proved once we show that $\Lambda(\log^{-\gamma}(e/t))\subset X$ for some $\gamma>{b}/{2}$.

It follows from the conditions of the theorem that $\alpha>1$. We choose $\alpha_0\in(1,\alpha)$ so that $\alpha_0+b/\beta>b+1$. Then, by the assumption, for each sufficiently large $n\in\mathbb{N}$ there are sets $B_1,B_2,\ldots, B_d$ such that $|B_j|=n$, $j=1,2,\ldots, d$, and
$$
|\mathcal{A}\cap \mathcal{B}_n|\geqslant n^{\alpha_0},
$$
where $\mathcal{B}_n:=B_1\times B_2\times\ldots \times B_d$. Fix
$n$ and a set $\mathcal{B}_n$ satisfying the above conditions. Since $|\mathcal{A}\cap \mathcal{B}_n|\leqslant n^d$, there exists $\delta\in[\alpha_0,d]$ that depends on $n$ and $\mathcal{ B}_n$ and also 
\begin{equation}
\label{card}
|\mathcal{A}\cap \mathcal{B}_n|=n^\delta.
\end{equation}

Then, we claim that there is a set $U_n\subset[0,1]$ such that $\mu(U_n)>1-2(e/2)^{-dn}$ and for all $u\in U_n$ 
\begin{equation}
\label{Linfty-Bound}
{\left\|\sum_{\jmath\in \mathcal{A}\cap \mathcal{B}_n}r_\jmath(u) \mathbf{r}_\jmath\right\|}_\infty\leqslant \sqrt{2d}n^\frac{\delta+1}{2}.
\end{equation}
Indeed, due to \eqref{card} and Bernstein's inequality (see e.g. \cite[Chapter~1, \S~6, formula (42)]{Shi} or \cite[Proposition~1.2]{AsBook}), we have for any $t\in[0,1]$ and $\lambda>0$
$$
\mu\left\{u\in[0,1]:\;\left\vert\sum_{\jmath\in \mathcal{A}\cap \mathcal{B}_n}{r}_\jmath(u) \mathbf{r}_{\jmath}(t)\right\vert>\lambda\right\}<2e^{-\frac{\lambda^2}{2n^\delta}},
$$
which implies
$$
\mu\left\{u\in[0,1]:\;\left\vert\sum_{\jmath\in \mathcal{A}\cap \mathcal{B}_n}{r}_\jmath(u) \mathbf{r}_{\jmath}(t)\right\vert>\sqrt{2d}n^{\frac{\delta+1}{2}}\right\}<2e^{-dn}.
$$
Observe that the sequence $\{\mathbf{r}_{\jmath}\}_{\jmath\in \mathcal{A}\cap \mathcal{B}_n}$ contains at most $dn$ distinct Rademacher functions. Therefore, there are at most $2^{dn}$ variants of the values of the sequence $\{\mathbf{r}_{\jmath}(t)\}_{\jmath\in \mathcal{A}\cap \mathcal{B}_n}$, where $t$ runs $[0,1]$. Therefore, from the preceding estimate it follows
$$
\mu\left\{u:\;\left\vert\sum_{\jmath\in \mathcal{A}\cap \mathcal{B}_n}{r}_\jmath(u) \mathbf{r}_{\jmath}(t)\right\vert>\sqrt{2d}n^{\frac{\delta+1}{2}}\;\mbox{for some }t\in [0,1]\right\}<2^{dn}\cdot2e^{-dn}.
$$
If now $U_n$ is the complement of the set from the last estimate, then $\mu(U_n)>1-2(e/2)^{-dn}$ and for all $u\in U_n$ we have \eqref{Linfty-Bound}. Thus, the claim is proved.
%Значит, для всех остальных $u$, которые и образуют множество $U_n$, будет выполняться \eqref{Linfty-Bound}.

Next, thanks to \eqref{card}, for all $u\in[0,1]$ we have
$$
{\left\|\sum_{\jmath\in \mathcal{A}\cap \mathcal{B}_n}r_\jmath(u) \mathbf{r}_\jmath\right\|}_\infty\leqslant n^\delta.
$$
Consequently, by \eqref{Linfty-Bound}, it holds
\begin{eqnarray*}
\int\limits_0^1 {\left\|\sum_{\jmath\in \mathcal{A}\cap \mathcal{B}_n}r_\jmath(u) \mathbf{r}_\jmath\right\|}_\infty\,du &\leqslant & \int\limits_{[0,1]\setminus U_n} {\left\|\sum_{\jmath\in \mathcal{A}\cap \mathcal{B}_n}r_\jmath(u) \mathbf{r}_\jmath\right\|}_\infty\,du+\int\limits_{U_n} {\left\|\sum_{\jmath\in \mathcal{A}\cap \mathcal{B}_n}r_\jmath(u) \mathbf{r}_\jmath\right\|}_\infty\,du\\
&\leqslant & n^\delta\cdot 2(2/e)^{dn}+\sqrt{2d}n^\frac{\delta+1}{2},
\end{eqnarray*}
whence
\begin{equation}\label{ERavnNorm}
\int\limits_0^1 {\left\|\sum_{\jmath\in \mathcal{A}\cap \mathcal{B}_n}r_\jmath(u) \mathbf{r}_\jmath\right\|}_\infty\,du \leqslant C n^\frac{\delta+1}{2},
\end{equation}
where a constant $C$ depends only on $d$.

On the other hand, for some set of points $t\in[0,1]$ of measure $2^{-dn}$, all the Rademacher functions, products of which are the functions $\mathbf{r}_\jmath$, $\jmath\in \mathcal{A}\cap \mathcal{B}_n$, take the value $1$, and hence from \eqref{card} it follows
\begin{equation}\label{eq3}
{\left\|\sum_{\jmath\in \mathcal{A}\cap \mathcal{B}_n}\mathbf{r}_\jmath\right\|}_X\geqslant{\|n^\delta\chi_{(0,2^{-dn})}\|}_X\geqslant n^\delta\phi_X(2^{-dn}),
\end{equation}
where $\phi_X$ is the fundamental function of $X$. Taking into account now successively  condition \eqref{eq4}, the embedding $L_\infty\subset X$, estimate \eqref{ERavnNorm} and the inequality $\alpha_0\le\delta$, we obtain
%{\color{blue}
%\begin{eqnarray*}
%\phi_X(2^{-dn})&\leqslant & n^{-\delta}{\left\|\sum_{\jmath\in \mathcal{A}\cap \mathcal{B}_n}\mathbf{r}_\jmath\right\|}_X\leqslant n^{-\delta}D\int\limits_0^1 {\left\|\sum_{\jmath\in \mathcal{A}\cap \mathcal{B}_n}r_\jmath(u) \mathbf{r}_\jmath\right\|}_X\,du\\&\leqslant & n^{-\delta}D\int\limits_0^1 {\left\|\sum_{\jmath\in \mathcal{A}\cap \mathcal{B}_n}r_\jmath(u) \mathbf{r}_\jmath\right\|}_\infty\,du \leqslant CDn^{-(\delta-1)/2}\leqslant C'n^{-(\alpha_0-1)/2}.
%\end{eqnarray*}}
%{\color{red}
\begin{eqnarray*}
\phi_X(2^{-dn})&\leqslant & n^{-\delta}{\left\|\sum_{\jmath\in \mathcal{A}\cap \mathcal{B}_n}\mathbf{r}_\jmath\right\|}_X\leqslant n^{-\delta}D\int\limits_0^1 {\left\|\sum_{\jmath\in \mathcal{A}\cap \mathcal{B}_n}r_\jmath(u) \mathbf{r}_\jmath\right\|}_X\,du\\
&\leqslant & n^{-\delta}C_1\int\limits_0^1 {\left\|\sum_{\jmath\in \mathcal{A}\cap \mathcal{B}_n}r_\jmath(u) \mathbf{r}_\jmath\right\|}_\infty\,du \leqslant C_2n^{-(\delta-1)/2}\leqslant C_2n^{-(\alpha_0-1)/2}.
\end{eqnarray*}
Since this inequality holds for all sufficiently large $n\in\mathbb{N}$ and the function $\phi_X$ is quasiconcave, then for all $t\in[0,1]$ we have
$$
\phi_X(t)\leqslant C\log^{-\gamma_0} (e/t),
$$
where $\gamma_0=(\alpha_0-1)/2>0$. Hence, $\Lambda(\log^{-\gamma_0}(e/t))\subset\Lambda(\phi_X)\subset X$, and if $\gamma_0>b/2$, i.e., if $\alpha_0>b+1$, the proof is complete. In the case when $\gamma_0\le b/2$ (equivalently, $\alpha_0\le b+1$), we proceed as follows.

According to Blei's inequalities (see \cite[Formula VII.(9.30) and Corollary XIII.29]{Blei} or \cite[Formula (1.7)]{BleiJanson}), for the same $\delta$ as above, all $p\geqslant 1$ and $u\in [0,1]$ it holds
$$
{\left\|\sum_{\jmath\in \mathcal{A}\cap \mathcal{B}_n}r_\jmath(u)\mathbf{r}_\jmath\right\|}_p\leqslant Cp^{\frac{\beta}{2}}\left(\sum_{\jmath\in \mathcal{A}\cap \mathcal{B}_n}(r_\jmath(u))^2\right)^\frac{1}{2}=Cp^{\frac{\beta}{2}}n^{\frac{\delta}{2}}.
$$
Therefore, by using extrapolation description \eqref{description} of the exponential Orlicz space $\mathrm{Exp}L^{2/\beta}$, we conclude that
$$
{\left\|\sum_{\jmath\in \mathcal{A}\cap \mathcal{B}_n}r_\jmath(u) \mathbf{r}_\jmath\right\|}_{\mathrm{Exp}L^{2/\beta}}\leqslant Cn^{\frac{\delta}{2}}.
$$
Hence, the equality $\mathrm{Exp}L^{2/\beta}=\mathcal{M}(\log^{-\beta/2}(e/t))$ and the definition of the norm in Marcinkiewicz spaces (see \S\,\ref{Symm}) imply that for all $u\in [0,1]$ 
$$
\left(\sum_{\jmath\in \mathcal{A}\cap \mathcal{B}_n}r_\jmath(u) \mathbf{r}_\jmath\right)^*(t)\leqslant Cn^{\delta/2}\log^{\beta/2}(e/t),\;\;0<t\le 1.
$$
Combining the latter inequality together with \eqref{Linfty-Bound}, we get for all $u\in U_n$ the estimate
\begin{equation}\label{eq2}
\left(\sum_{\jmath\in \mathcal{A}\cap \mathcal{B}_n}r_\jmath(u) \mathbf{r}_\jmath\right)^*(t)\leqslant Cn^{\delta/2}\min\{n^{1/2},\log^{\beta/2}(e/t)\},\;\;0<t\le 1.
\end{equation}

Further, setting $\gamma_{n+1}=\gamma_0+\gamma_n/\beta$, $n=0,1,\dots$, where still $\gamma_0=(\alpha_0-1)/2$, we will show that for each $n=0,1,\dots$
\begin{equation}\label{embeddings}
\Lambda(\log^{-\gamma_n}(e/t))\subset X.
\end{equation}
Since this embedding is valid for $n=0$, it suffices to check that  the validity of \eqref{embeddings} for $n=k$ implies that for $n=k+1$. 

Indeed, 
%Предположим теперь, что для некоторого $\gamma_k$, $0<\gamma_k\leqslant 1/2$, имеет место вложение $\Lambda(\log^{-\gamma_k}(e/t))\subset X$. Тогда 
from inequalities \eqref{eq3}, \eqref{eq4} and \eqref{eq2} it follows
\begin{eqnarray*}
\phi_X(2^{-dn})&\leqslant & n^{-\delta}{\left\|\sum_{\jmath\in \mathcal{A}\cap \mathcal{B}_n}\mathbf{r}_\jmath\right\|}_X\leqslant Dn^{-\delta}\int\limits_0^1 {\left\|\sum_{\jmath\in \mathcal{A}\cap \mathcal{B}_n}r_\jmath(u) \mathbf{r}_\jmath\right\|}_X\,du\\
&\leqslant & Cn^{-\delta}\left(\int\limits_{[0,1]\setminus U_n} {\left\|\sum_{\jmath\in \mathcal{A}\cap \mathcal{B}_n}r_\jmath(u) \mathbf{r}_\jmath\right\|}_\infty du+\int\limits_U {\left\|\sum_{\jmath\in \mathcal{A}\cap \mathcal{B}_n}r_\jmath(u) \mathbf{r}_\jmath\right\|}_{\Lambda(\log^{-\gamma_k}(e/t))} du\right)\\
&\leqslant & C\cdot 2(e/d)^{-dn}\\
&\ &\quad+C'n^{-\delta/2}\left(\int\limits_0^{e^{1-n^{1/\beta}}}n^{\frac{1}{2}}\,d\log^{-\gamma_k}(e/t)+\int\limits_{e^{1-n^{1/\beta}}}^1\log^{\frac{\beta}{2}}(e/t)\,d\log^{-\gamma_k}(e/t)\right)\\
&\leqslant & C''n^{-(\delta/2+\gamma_k/\beta-1/2)}\leqslant C''n^{-(\alpha_0/2+\gamma_k/\beta-1/2)}=C''n^{-\gamma_{k+1}}.
\end{eqnarray*}
Hence, taking into account the quasiconcavity of the fundamental functions, we immediately obtain \eqref{embeddings} for $n=k+1$.
%$\Lambda(\log^{-\gamma_{k+1}}(e/t))\subset \Lambda(\phi_X)\subset X$.
%Таким образом, имеет место вложение $\Lambda(\log^{-\gamma_{k+1}}(e/t))\subset X$, где $\gamma_{k+1}=\gamma_0+\gamma_k/\beta$. 
%Итерируя этот процесс, заключаем, что $\Lambda(\log^{-\gamma_n}(e/t))\subset X$ для произвольного $n\in\mathbb{N}$. 

Next, note that
%{\color{blue}
%$$
%\gamma_n=\gamma_0\sum_{i=0}^{n-1}\frac{1}{\beta^i}\to \frac{\beta\gamma_0}{\beta-1}\;\;\mbox{при}\;\;n\to\infty.
%$$}
%{\color{red}
$$
\gamma_n=\gamma_0\sum_{i=0}^{n}\frac{1}{\beta^i}\to \frac{\beta\gamma_0}{\beta-1}\;\;\mbox{as}\;\;n\to\infty.
$$
Moreover, thanks to the assumptions, we have $\alpha_0>b+1-b/\beta$ and $\beta\ge 1$. In consequence,
$$
\frac{\beta\gamma_0}{\beta-1}>\frac12\Big(b-\frac{b}{\beta}\Big)\frac{\beta}{\beta-1}=\frac{b}{2}.
$$
%(второе неравенство эквивалентно тому, что $b-1/\beta\ge b(1-1/\beta)$).
From the last relations it follows that the inequality $\gamma_n>b/2$   holds whenever $n$ is sufficiently large. As was observed in the very beginning of the proof, this inequality combined with embedding \eqref{embeddings} implies the desired result. 
\end{proof}

In the case when $b=1$ we get

\begin{cor}\label{RUD} Let $X$ be a symmetric space and let $d\in\mathbb{N}$. Suppose that 
%{\color{blue} $\mathcal{A}\subset\mathbb{N}^d$} 
$\mathcal{A}\subset\bigtriangleup^d$ is a $(\alpha,\beta)$-set, with $\alpha+1/\beta>2$, such that for some $D>0$ and any finite set $\mathcal{A}'\subset \mathcal{A}$ it holds
\begin{equation*}
\left\|\sum_{\jmath\in \mathcal{A}'}\mathbf{r}_\jmath\right\|_X\leqslant D\int\limits_0^1 {\left\|\sum_{\jmath\in \mathcal{A}'}r_\jmath(u) \mathbf{r}_\jmath\right\|}_X\,du.
\end{equation*}
Then, $\mathrm{Exp}L^{2}\subset X$.

In particular, the latter embedding is fulfilled if $\mathcal{A}$ is a $(\alpha-\varepsilon,\alpha+\varepsilon)$-set for some $\alpha>1$ and sufficiently small $\varepsilon>0$ and the system $\{\mathbf{r}_\jmath\}_{\jmath\in\mathcal{A}}$ is a RUD sequence in $X$.
\end{cor}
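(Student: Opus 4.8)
The plan is to obtain Corollary~\ref{RUD} as the special case $b=1$ of Theorem~\ref{RUDgen}, so essentially no new argument is needed; one only has to match hypotheses. First I would check that, with the choice $b=1$, every numerical constraint of Theorem~\ref{RUDgen} is met under the assumptions of the corollary. The condition $1\le b\le d$ is the trivial $1\le 1\le d$; the conditions $1\le\alpha\le d$ and $1\le\beta\le d$ are already part of the assumption that $\mathcal{A}\subset\bigtriangleup^d$ is an $(\alpha,\beta)$-set (see Definitions~\ref{a-set}--\ref{ab-set} together with the remarks following Definition~\ref{ab-set}); and the key inequality $\alpha+b/\beta>b+1$ becomes precisely $\alpha+1/\beta>2$, which is assumed. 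Since the displayed inequality in the corollary is verbatim \eqref{eq4}, Theorem~\ref{RUDgen} applies and yields $X\supset\mathrm{Exp}L^{2/b}=\mathrm{Exp}L^{2}$, which is the first assertion.

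For the ``in particular'' part I would invoke the corresponding clause of Theorem~\ref{RUDgen} with $b=1$. If $\mathcal{A}$ is a $(\alpha-\varepsilon,\alpha+\varepsilon)$-set with $\alpha>1$, the only point to verify is that for all sufficiently small $\varepsilon>0$ one has $(\alpha-\varepsilon)+1/(\alpha+\varepsilon)>2$; this holds by continuity, because the limiting value $\alpha+1/\alpha$ strictly exceeds $2$, the latter inequality being equivalent to $(\alpha-1)^2>0$. Finally, if $\{\mathbf{r}_\jmath\}_{\jmath\in\mathcal{A}}$ is a RUD sequence in $X$ with some constant $D$, then substituting zero coefficients into the defining inequality of a RUD system shows that \eqref{eq4} holds for every finite subset $\mathcal{A}'\subset\mathcal{A}$ with the same $D$. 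Thus all hypotheses of Theorem~\ref{RUDgen} are in force and the embedding $\mathrm{Exp}L^{2}\subset X$ follows.

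I do not foresee any genuine obstacle here: the corollary is a direct specialization of Theorem~\ref{RUDgen}, and the only (elementary) step requiring attention is the persistence of the strict inequality $\alpha+b/\beta>b+1$ under small perturbations of the exponents, which is immediate from $(\alpha-1)^2>0$.
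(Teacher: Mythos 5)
Your proposal is correct and coincides with the paper's own treatment: the corollary is stated there precisely as the case $b=1$ of Theorem~\ref{RUDgen}, with no further argument given, and your verification of the hypotheses (including the elementary continuity check $(\alpha-\varepsilon)+1/(\alpha+\varepsilon)>2$ for small $\varepsilon$, which reduces to $(\alpha-1)^2>0$) is exactly what is needed.
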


\begin{theorem}\label{FractChaos} Let $X$ be a symmetric space and let $d\in\mathbb{N}$. Assume that 
%{\color{blue} $\mathcal{A}\subset\mathbb{N}^d$} 
$\mathcal{A}\subset\bigtriangleup^d$ is a $(\alpha,\beta)$-set, $\alpha+1/\beta>2$. The following conditions are equivalent:\\
(a) $\{\mathbf{r}_\jmath\}_{\jmath\in \mathcal{A}}$ is a RUD sequence in $X$;\\
(b) $\{\mathbf{r}_\jmath\}_{\jmath\in \mathcal{A}}$ is an unconditional basic sequence in $X$;\\
(c) the sequence $\{\mathbf{r}_\jmath\}_{\jmath\in \mathcal{A}}$ is equivalent in $X$ to the canonical basis in $\ell_2$, that is, for some constant $C_X$ we have
\begin{equation}
\label{equiv}
C_X^{-1}{\left\|\{a_{\jmath}\}_{\jmath\in\mathcal{A}}\right\|}_{\ell_2}\leqslant{\left\|\sum_{\jmath\in\mathcal{A}}a_{\jmath}\mathbf{r}_\jmath\right\|}_X\leqslant C_X{\left\|\{a_{\jmath}\}_{\jmath\in\mathcal{A}}\right\|}_{\ell_2}.
\end{equation}

In particular, if $\alpha>1$, then for sufficiently small $\varepsilon>0$ and arbitrary $(\alpha-\varepsilon,\alpha+\varepsilon)$-set $\mathcal{A}$ conditions (a), (b) and (c) are equivalent.
\end{theorem}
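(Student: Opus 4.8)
The plan is to establish the cycle $(c)\Rightarrow(b)\Rightarrow(a)\Rightarrow(c)$; the first two arrows are general and routine, and the whole content sits in $(a)\Rightarrow(c)$. For $(c)\Rightarrow(b)$: the linear bijection $T\colon\overline{\mathrm{span}}\,\{\mathbf{r}_\jmath\}_{\jmath\in\mathcal{A}}\to\ell_2$ with $T\mathbf{r}_\jmath=e_\jmath$ is an isomorphism by \eqref{equiv}, and since $\|\{\theta_\jmath a_\jmath\}\|_{\ell_2}=\|\{a_\jmath\}\|_{\ell_2}$ for every choice of signs, the chain $\|\sum_\jmath\theta_\jmath a_\jmath\mathbf{r}_\jmath\|_X\le\|T^{-1}\|\,\|\{a_\jmath\}\|_{\ell_2}\le\|T^{-1}\|\,\|T\|\,\|\sum_\jmath a_\jmath\mathbf{r}_\jmath\|_X$ gives unconditionality (in particular $\{\mathbf{r}_\jmath\}_{\jmath\in\mathcal{A}}$ is then a basic sequence). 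For $(b)\Rightarrow(a)$: integrating in $u$ the unconditionality inequality applied, for each fixed $u$, to the signs $\theta_\jmath=r_\jmath(u)$ yields exactly the RUD inequality.

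For $(a)\Rightarrow(c)$ I would argue as follows. Since the RUD inequality for $\{\mathbf{r}_\jmath\}_{\jmath\in\mathcal{A}}$ with all coefficients equal to $1$ is precisely \eqref{eq4}, Corollary~\ref{RUD} applies and gives $\mathrm{Exp}L^{2}\subset X$. The lower estimate in \eqref{equiv} needs no hypothesis on $X$: the functions $\{\mathbf{r}_\jmath\}_{\jmath\in\bigtriangleup^d}$ form an orthonormal (Walsh) system, so $\|\sum_\jmath a_\jmath\mathbf{r}_\jmath\|_{L_2}=\|\{a_\jmath\}\|_{\ell_2}$; a reverse Khintchine inequality for a Rademacher chaos of order $d$ (which follows from the forward $L_p$-bound for degree-$d$ chaos, e.g.\ Blei's inequality with $\beta=d$, combined with H\"older's inequality) gives $\|\sum_\jmath a_\jmath\mathbf{r}_\jmath\|_{L_1}\ge c_d\|\{a_\jmath\}\|_{\ell_2}$; and the embedding $X\subset L_1$, valid in every symmetric space, then yields $\|\sum_\jmath a_\jmath\mathbf{r}_\jmath\|_X\ge c\,\|\{a_\jmath\}\|_{\ell_2}$.

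For the upper estimate in \eqref{equiv} I would combine the RUD hypothesis with the embedding $\mathrm{Exp}L^{2}\subset X$ just obtained. Fix a finite $\mathcal{A}'\subset\mathcal{A}$ and set $G(u,t):=\sum_{\jmath\in\mathcal{A}'}r_\jmath(u)\,a_\jmath\,\mathbf{r}_\jmath(t)$ on $[0,1]\times[0,1]$. For each fixed $t$, the function $u\mapsto G(u,t)$ is a linear combination of the independent Rademacher functions $r_\jmath$ with coefficients $a_\jmath\mathbf{r}_\jmath(t)$, whose $\ell_2$-norm equals $\|\{a_\jmath\}\|_{\ell_2}$ because $|\mathbf{r}_\jmath(t)|=1$; hence, by the classical Khintchine inequality \eqref{basic} and the bound $C_p\le\sqrt p$, one has $\int_0^1|G(u,t)|^p\,du\le\bigl(\sqrt p\,\|\{a_\jmath\}\|_{\ell_2}\bigr)^p$ for all $p\ge1$, uniformly in $t$. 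Integrating in $t$ and using the extrapolation description \eqref{description} on the square (a non-atomic probability space) gives $\|G\|_{\mathrm{Exp}L^{2}([0,1]^2)}\le C\,\|\{a_\jmath\}\|_{\ell_2}$.

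The technically central point is a Fubini-type transfer of this two-variable bound to an average over $u$ of one-variable exponential norms. Writing $\lambda:=\|G\|_{\mathrm{Exp}L^{2}([0,1]^2)}$ and (using that $\mathrm{Exp}L^{2}$ may be taken to be generated by $e^{u^{2}}-1$) $h(u):=\int_0^1\bigl(e^{(|G(u,t)|/\lambda)^{2}}-1\bigr)\,dt$, one has $\int_0^1 h(u)\,du\le1$, while the convexity bound $M(y/s)\le s^{-1}M(y)$ for $s\ge1$ applied to $M(v)=e^{v}-1$ shows $\|G(u,\cdot)\|_{\mathrm{Exp}L^{2}(dt)}\le\lambda\,(1+h(u))$ for each $u$; integrating in $u$ yields $\int_0^1\|G(u,\cdot)\|_{\mathrm{Exp}L^{2}(dt)}\,du\le 2\lambda$. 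Chaining the RUD inequality, the embedding $\mathrm{Exp}L^{2}\subset X$ applied pointwise in $u$, the last display and the bound on $\lambda$ then gives $\|\sum_{\jmath\in\mathcal{A}'}a_\jmath\mathbf{r}_\jmath\|_X\le C'\,\|\{a_\jmath\}\|_{\ell_2}$ with $C'$ independent of $\mathcal{A}'$; passing to arbitrary $\{a_\jmath\}\in\ell_2$ gives \eqref{equiv}, that is $(c)$. Finally, the ``in particular'' assertion follows because $\alpha+1/\alpha>2$ for every $\alpha>1$, so for all sufficiently small $\varepsilon>0$ any $(\alpha-\varepsilon,\alpha+\varepsilon)$-set (which is automatically a $(\alpha-\varepsilon,\min\{\alpha+\varepsilon,d\})$-set) meets the hypothesis $\alpha'+1/\beta'>2$. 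The step I expect to be the main obstacle is exactly this Fubini-type passage for the exponential Orlicz space; everything else is a recombination of Corollary~\ref{RUD}, the classical Khintchine inequality, and standard facts about symmetric spaces.
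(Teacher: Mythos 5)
Your proof is correct and follows essentially the same route as the paper: the only nontrivial implication is $(a)\Rightarrow(c)$, which the paper also obtains from Corollary~\ref{RUD} together with a Khintchine-type upper bound extracted from the RUD inequality, the embedding $\mathrm{Exp}L^{2}\subset X$, and a Fubini-type swap of exponential Orlicz norms (packaged there as Proposition~\ref{OrlSq}). The transfer step you single out as the main obstacle is exactly inequality \eqref{eq0}, i.e.\ Lemma~3 of \cite{AL_AA}, which the paper simply cites; your inline derivation via the convexity bound $N(y/s)\leqslant N(y)/s$ for $s\geqslant 1$ is sound, and your use of extrapolation in place of the Rodin--Semenov Khintchine inequality in $\mathrm{Exp}L^{2}$ is an inessential variation.
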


It is clear that only the implication $(a)\Rightarrow (c)$ needs a proof. This proof follows directly from Corollary \ref{RUD} and the following assertion.

\begin{prop}\label{OrlSq} Suppose $X$ is a symmetric space such that $\mathrm{Exp}L^2\subset X$. Then there is a constant $C'$ such that for each uniformly bounded $D$-RUD sequence $\{x_j\}_{j\in\mathbb{N}}$ from $X$ the following Khintchine type inequality holds:
$$
{\left\|\sum_{j\in\mathbb{N}} a_jx_j\right\|}_X\leqslant C'D\sup_{j\in\mathbb{N}}{\|x_j\|}_{\infty}\cdot\left(\sum_{j\in\mathbb{N}}a_j^2\right)^\frac{1}{2}.
$$
\end{prop}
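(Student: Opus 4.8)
The plan is to estimate the right-hand side of the $D$-RUD inequality directly. I would first reduce to the case of a finite sum $\sum_{j=1}^{n}a_jx_j$; the countable case then follows from a uniform finite bound, because applying that bound to tails $\sum_{j=m+1}^{n}a_jx_j$ (set the remaining coefficients to $0$) shows that the partial sums form a Cauchy sequence in $X$. So fix finitely many $a_j$, set $K:=\sup_{j}\|x_j\|_\infty$ and $R:=K\bigl(\sum_{j}a_j^2\bigr)^{1/2}$, and put $G(u,t):=\sum_{j}r_j(u)\,a_jx_j(t)$ on $[0,1]\times[0,1]$. By the $D$-RUD property, $\bigl\|\sum_j a_jx_j\bigr\|_X\le D\int_0^1\|G(u,\cdot)\|_X\,du$, so it suffices to show that $\int_0^1\|G(u,\cdot)\|_X\,du\le C'R$ with an absolute constant $C'$.

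The starting point is that for each \emph{fixed} $t$ the function $u\mapsto G(u,t)$ is a Rademacher sum $\sum_j c_j(t)r_j$ with $\sum_j c_j(t)^2=\sum_j a_j^2x_j(t)^2\le R^2$. Hence, combining the Khintchine inequality in the form $C_p\le\sqrt p$ with the extrapolation description \eqref{description} of $\mathrm{Exp}L^2$, there is an absolute constant $c_0$ such that $\|G(\cdot,t)\|_{\mathrm{Exp}L^2}\le c_0R$ for every $t$; equivalently, writing $\lambda:=c_0R$, one has $\int_0^1\bigl(e^{G(u,t)^2/\lambda^2}-1\bigr)\,du\le1$ for every $t$. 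Integrating this over $t$ and applying Fubini's theorem gives $\int_0^1\psi(u)\,du\le1$, where $\psi(u):=\int_0^1\bigl(e^{G(u,t)^2/\lambda^2}-1\bigr)\,dt$.

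The step I expect to be the main obstacle is to pass from this averaged modular bound to a genuine bound on $\int_0^1\|G(u,\cdot)\|_{\mathrm{Exp}L^2}\,du$: the supremum over $p$ defining the Orlicz norm does not interchange with the integral over $u$, so one must control the set of $u$ on which $\psi(u)$ is large. For this I would use the elementary inequality $e^{x/s}-1\le(e^x-1)/s$, valid for $x\ge0$ and $s\ge1$ (it follows by comparing derivatives, both sides vanishing at $x=0$). Applied with $s=\psi(u)$ when $\psi(u)\ge1$, it yields $\int_0^1\bigl(e^{G(u,t)^2/(\lambda^2\psi(u))}-1\bigr)\,dt\le1$, hence $\|G(u,\cdot)\|_{\mathrm{Exp}L^2}\le\lambda\sqrt{\psi(u)}$; and trivially $\|G(u,\cdot)\|_{\mathrm{Exp}L^2}\le\lambda$ when $\psi(u)\le1$. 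Thus $\|G(u,\cdot)\|_{\mathrm{Exp}L^2}\le\lambda\bigl(1+\sqrt{\psi(u)}\bigr)$ for every $u$, and integrating together with the Cauchy--Schwarz inequality gives $\int_0^1\|G(u,\cdot)\|_{\mathrm{Exp}L^2}\,du\le\lambda\Bigl(1+\bigl(\int_0^1\psi(u)\,du\bigr)^{1/2}\Bigr)\le2\lambda=2c_0R$.

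Finally, the hypothesis $\mathrm{Exp}L^2\overset{C_1}{\subset}X$ lets me replace the $\mathrm{Exp}L^2$-norm under the integral by the $X$-norm, so that $\int_0^1\|G(u,\cdot)\|_X\,du\le 2C_1c_0R$; together with the $D$-RUD inequality recalled above this yields $\bigl\|\sum_j a_jx_j\bigr\|_X\le 2C_1c_0D\,K\bigl(\sum_j a_j^2\bigr)^{1/2}$, which is the assertion with $C'=2C_1c_0$.
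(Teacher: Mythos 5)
Your proof is correct and follows essentially the same route as the paper: apply the $D$-RUD inequality, pass to the $\mathrm{Exp}L^2$-norm via the embedding $\mathrm{Exp}L^2\subset X$, use Khintchine's inequality in $\mathrm{Exp}L^2$ for each fixed $t$, and then interchange the roles of $u$ and $t$. The only difference is that the paper performs that interchange by citing the Fubini-type inequality \eqref{eq0} (Lemma~3 of \cite{AL_AA}), whereas your modular/convexity argument with $e^{x/s}-1\leqslant (e^x-1)/s$ is in effect a self-contained proof of that lemma (with the same constant $2$) in the case $M=N_2$.
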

\begin{proof}
As is known (see e.g. \cite[Lemma 3]{AL_AA}), for every Orlicz function $M$ and any measurable function $z=z(u,t)$ defined on the square $[0,1]\times[0,1]$ we have
\begin{equation}\label{eq0}
\int\limits_0^1{\|z(u,\cdot)\|}_{\mathrm{L}_M(\cdot)}\,du\leqslant 2\;\mathrm{ess}\sup_{t\in[0,1]}{\|z(\cdot,t)\|}_{\mathrm{L}_M(\cdot)}.
\end{equation}
Therefore, from the conditions of the proposition and the Khintchine inequality applied to the Rademacher system in the space $\mathrm{Exp}L^2$ (see \cite[Chapter~V, Theorem~8.7]{Zyg} or \cite{RS}) it follows
\begin{eqnarray*}
{\left\|\sum_{j\in\mathbb{N}}a_jx_j\right\|}_X &\leqslant & D\int\limits_0^1{\left\|\sum_{j\in\mathbb{N}}r_j(u)a_jx_j\right\|}_X\,du \\ 
&\leqslant & DC\int\limits_0^1{\left\|\sum_{j\in\mathbb{N}}r_j(u)a_jx_j(\cdot)\right\|}_{\mathrm{Exp}L^2(\cdot)}\,du\\
&\leqslant & 2 DC\;\mathrm{ess}\sup_{t\in[0,1]}{\left\|\sum_{j\in\mathbb{N}}r_j(\cdot)a_jx_j(t)\right\|}_{\mathrm{Exp}L^2(\cdot)}\\
&\leqslant & C'D\;\mathrm{ess}\sup_{t\in[0,1]} \left(\sum_{j\in\mathbb{N}}(a_jx_j(t))^2\right)^\frac{1}{2}\\
&\leqslant & C'D\sup_{j\in\mathbb{N}}{\|x_j\|}_{\infty}\cdot\left(\sum_{j\in\mathbb{N}}a_j^2\right)^\frac{1}{2}.
\end{eqnarray*}

\end{proof}

Theorem \ref{FractChaos} shows that the difference in the behaviour of the Rademacher sequence $\{r_j\}$ and the chaos $\{r_{j_1}r_{j_2}\}_{j_1>j_2}$, noted in the Introduction, is caused by different combinatorial dimensions of these systems. Moreover, this result implies that unconditionality of a subsystem $\{\mathbf{r}_\jmath\}_{\jmath\in \mathcal{A}}$ of the chaos of any order $d$ in a  symmetric space $X$ and its equivalence in $X$ to the canonical basis in $\ell_2$ are equivalent whenever the corresponding index set $\mathcal{A}$ has exact combinatorial dimension $\alpha>1$.

In the case of Orlicz spaces Theorem \ref{FractChaos} may be refined. Namely, if a set $\mathcal{A}$ has exact combinatorial dimension $\alpha>1$, the above conditions (a), (b) and (c) can be characterized in terms of certain embeddings.

\begin{theorem}\label{FractChaosOrl} Let $L_M$ be an Orlicz space, $d\in\mathbb{N}$. Suppose that a set $\mathcal{A}\subset\bigtriangleup^d$ has exact combinatorial dimension $\alpha>1$. The following conditions are equivalent: 
%$(i) - (iv)$ из теоремы 
%\ref{RUDgen1} эквивалентны, если $b=\alpha$.
\\
(i) $\{\mathbf{r}_\jmath\}_{\jmath\in \mathcal{A}}$ is a RUD sequence in $L_M$;\\
(ii) $\{\mathbf{r}_\jmath\}_{\jmath\in \mathcal{A}}$ is an unconditional basic sequence in $L_M$;\\
(iii) the sequence $\{\mathbf{r}_\jmath\}_{\jmath\in \mathcal{A}}$ is equivalent in $L_M$ to the canonical basis in $\ell_2$, that is, for some constant $C_M$ we have
$$
C_M^{-1}{\left\|\{a_{\jmath}\}_{\jmath\in \mathcal{A}}\right\|}_{\ell_2}\leqslant{\left\|\sum_{\jmath\in \mathcal{A}}a_{\jmath}\mathbf{r}_\jmath\right\|}_{L_M}\leqslant C_M{\left\|\{a_{\jmath}\}_{\jmath\in \mathcal{A}}\right\|}_{\ell_2};
$$
(iv) $L_M\supset\mathrm{Exp}L^{2/\alpha}$.
\end{theorem}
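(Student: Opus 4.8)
The plan is to establish the cycle of implications $(iii)\Rightarrow(ii)\Rightarrow(i)\Rightarrow(iv)\Rightarrow(iii)$, noting that $(ii)\Rightarrow(i)$ is immediate (unconditionality implies the RUD estimate with the same constant) and $(iii)\Rightarrow(ii)$ is also trivial (equivalence to the $\ell_2$-basis forces $1$-unconditionality up to the constant $C_M^2$, since the Rademacher functions $r_\jmath(u)$ are signs). So the real content lies in the two arrows $(i)\Rightarrow(iv)$ and $(iv)\Rightarrow(iii)$, and the first of these is where combinatorial dimension enters.

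For $(i)\Rightarrow(iv)$ I would invoke Corollary~\ref{RUD}. Since $\mathcal{A}$ has exact combinatorial dimension $\alpha>1$, it is an $(\alpha,\alpha)$-set, hence for any sufficiently small $\varepsilon>0$ it is a $(\alpha-\varepsilon,\alpha+\varepsilon)$-set. The hypothesis $\alpha>1$ lets us pick $\varepsilon$ small enough that $(\alpha-\varepsilon)+1/(\alpha+\varepsilon)>2$ (this inequality holds at $\varepsilon=0$ because $\alpha+1/\alpha>2$ for $\alpha>1$, and it is continuous in $\varepsilon$). Then Corollary~\ref{RUD} applied with $\beta=\alpha+\varepsilon$ gives $\mathrm{Exp}L^2\subset L_M$ — but this is weaker than what we want. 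To obtain the sharp conclusion $\mathrm{Exp}L^{2/\alpha}\subset L_M$ I would instead run Theorem~\ref{RUDgen} with the parameter $b$ chosen as large as the density hypothesis allows: the condition $\alpha+b/\beta>b+1$ with $\alpha=\beta$ (exact dimension) becomes $b(1-1/\alpha)<\alpha-1$, i.e. $b<\alpha$, so for every $b<\alpha$ we get $L_M\supset\mathrm{Exp}L^{2/b}$. Letting $b\uparrow\alpha$ and using that $\mathrm{Exp}L^{2/b}\supset\mathrm{Exp}L^{2/\alpha}$ for $b<\alpha$ with uniformly controlled embedding constants (this is where I expect the main technical care: one must check the embedding constants do not blow up as $b\uparrow\alpha$, or else argue directly at the endpoint via the Marcinkiewicz-space identification $\mathrm{Exp}L^{2/b}=\mathcal{M}(\log^{-b/2}(e/t))$ and a supremum over $b<\alpha$) yields $\mathrm{Exp}L^{2/\alpha}\subset L_M$. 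This endpoint passage, threading the density parameters $b,\beta$ through Theorem~\ref{RUDgen} to reach precisely exponent $2/\alpha$, is the crux of the argument.

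For $(iv)\Rightarrow(iii)$ I would combine the embedding $\mathrm{Exp}L^{2/\alpha}\subset L_M$ with the sub-$\alpha$-set property. The upper estimate in \eqref{equiv} should follow from Blei's inequality: since $\mathcal{A}$ is a sub-$\alpha$-set, for any finitely supported $\{a_\jmath\}$ and any $p\ge 1$ one has a bound of the form $\|\sum a_\jmath\mathbf{r}_\jmath\|_p\le Cp^{\alpha/2}\|\{a_\jmath\}\|_{\ell_2}$ (the exact-dimension-$\alpha$ analogue of the computation already carried out in the proof of Theorem~\ref{RUDgen}); by the extrapolation description \eqref{description} this says $\|\sum a_\jmath\mathbf{r}_\jmath\|_{\mathrm{Exp}L^{2/\alpha}}\le C\|\{a_\jmath\}\|_{\ell_2}$, and then $\mathrm{Exp}L^{2/\alpha}\subset L_M$ transfers this to $L_M$. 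The lower estimate is the easy, space-independent direction: $\|\sum a_\jmath\mathbf{r}_\jmath\|_{L_M}\ge c\|\sum a_\jmath\mathbf{r}_\jmath\|_{L_1}\ge c'\|\{a_\jmath\}\|_{\ell_2}$ by the reverse Khintchine inequality \eqref{basic1} for the chaos (the $\mathbf{r}_\jmath$ are orthonormal with $L_2$-norm one, and $L_1$-norms of such multilinear forms are bounded below by a fixed multiple of the $L_2$-norm), valid in every symmetric space since $L_M\subset L_1$. This establishes \eqref{equiv}, completing the cycle. The one place to be careful in $(iv)\Rightarrow(iii)$ is making sure Blei's inequality is being quoted in the form matching a sub-$\alpha$-set (the cited formulas in \cite{Blei} and \cite{BleiJanson} are phrased in terms of combinatorial dimension, and one should confirm the one-sided upper density estimate suffices for the upper moment bound), but this is routine given the machinery already in place.
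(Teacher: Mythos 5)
Your $(iv)\Rightarrow(iii)$ step is essentially the paper's argument (Blei's moment inequality for a sub-$\alpha$-set plus the extrapolation description \eqref{description} for the upper bound, and $L_M\subset L_1$ with the reverse Khintchine inequality for the chaos for the lower bound), and the trivial arrows are fine. The genuine gap is in $(i)\Rightarrow(iv)$. Running Theorem~\ref{RUDgen} with $\alpha=\beta$ does give $L_M\supset\mathrm{Exp}L^{2/b}$ for every $b<\alpha$, but the limit passage $b\uparrow\alpha$ cannot work, even with uniformly bounded embedding constants: for $b<\alpha$ one has $\mathrm{Exp}L^{2/b}\subsetneq\mathrm{Exp}L^{2/\alpha}$ (the target space is the \emph{larger} one), and a function such as $x^*(t)=\log^{\alpha/2}(e/t)$ belongs to $\mathrm{Exp}L^{2/\alpha}=\mathcal{M}(\log^{-\alpha/2}(e/t))$ while $\|x\|_{\mathcal{M}(\log^{-b/2}(e/t))}=\infty$ for every $b<\alpha$. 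So the statements $L_M\supset\mathrm{Exp}L^{2/b}$, $b<\alpha$, give no information whatsoever about such $x$, and neither does a supremum of Marcinkiewicz norms over $b<\alpha$; the fallback you sketch fails for the same reason. Moreover, the iteration inside Theorem~\ref{RUDgen} produces $\gamma_n\to\beta\gamma_0/(\beta-1)$ only in the limit, so the hypothesis $b<\alpha$ there is genuinely strict and the endpoint is out of reach by that route.

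The paper resolves this by ordering the implications differently: it first gets the full equivalence of (i), (ii), (iii) from Theorem~\ref{FractChaos} (which only needs the non-sharp embedding $\mathrm{Exp}L^2\subset L_M$ from Corollary~\ref{RUD} together with Proposition~\ref{OrlSq}), and only then proves $(iii)\Rightarrow(iv)$. With (iii) in hand, the super-$\alpha$-set property combined with \eqref{eq3} and the $\ell_2$-upper bound yields the endpoint estimate $\phi_{L_M}(2^{-dn})\leqslant C' n^{-\alpha/2}$ directly, hence $\phi_{L_M}(t)\leqslant C\log^{-\alpha/2}(e/t)$. The final, and decisive, ingredient — which your proposal does not use at all — is that $L_M$ is an Orlicz space: from $\phi_{L_M}(t)=1/M^{-1}(1/t)$ one converts the fundamental-function bound into the pointwise bound $M(u)\leqslant e^{(Cu)^{2/\alpha}}-1$ for $u\geqslant 1$, which gives $\mathrm{Exp}L^{2/\alpha}\subset L_M$. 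For a general symmetric space the fundamental-function estimate would only give the Lorentz-space embedding $\Lambda(\log^{-\alpha/2}(e/t))\subset X$, which is strictly weaker than (iv); this is exactly why the theorem is stated for Orlicz spaces, and why your attempt to reach (iv) from (i) without passing through (iii) and the Orlicz structure cannot be repaired as written.
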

\begin{proof} 
The equivalence of conditions (i), (ii) and (iii) follows from Theorem \ref{FractChaos}. Thus, it remains only to verify that (iii) is equivalent to (iv).

Assume first that embedding (iv) holds. Applying again Blei's inequalities (\cite[Formula VII.(9.30) and Corollary XIII.29]{Blei} or \cite[Formula (1.7)]{BleiJanson}), for all $p\geqslant 1$ and any  sequence $\{a_\jmath\}_{\jmath\in A}$ we obtain
$$
{\left\|\sum_{\jmath\in \mathcal{A}}a_\jmath\mathbf{r}_\jmath\right\|}_p\leqslant C(\alpha,d)p^{\frac{\alpha}{2}}\left(\sum_{\jmath\in \mathcal{A}}a_\jmath^2\right)^\frac{1}{2}.
$$
%Кроме того, согласно экстраполяционному описанию экспоненциальных пространств Орлича (см. \cite[formula (2.27), page 23]{M94} или \cite[теорема 2]{As03})
%$$
%{\|x\|}_{{\rm Exp} L^{2/\alpha}}\asymp \sup_{p\geqslant 1}\frac{\|x\|_p}{p^{\alpha/2}}
%$$
Therefore, by the embedding $L_M\supset\mathrm{Exp}L^{2/\alpha}$   
and the extrapolation description of the space ${\rm Exp} L^{2/\alpha}$ (see \eqref {description}), we have 
$$
{\left\|\sum_{\jmath\in \mathcal{A}}a_\jmath \mathbf{r}_\jmath\right\|}_{L_M}\leqslant C{\left\|\sum_{\jmath\in \mathcal{A}}a_\jmath \mathbf{r}_\jmath\right\|}_{\mathrm{Exp}L^{2/\alpha}}\leqslant C'\left(\sum_{\jmath\in \mathcal{A}}a_\jmath^2\right)^\frac{1}{2},
$$
and hence the right-hand side inequality in (iii) follows. Since $X\subset L_1$, the left-hand side of this inequality is fulfilled in every symmetric space $X$ (see also \cite[Lemma 6]{AL_AA}). Thus, the implication (iv)$\Rightarrow$ (iii) is proven.

%Тот фаакт, что из $(iv)$ следует $(iii)$ может быть получен точно так же, как в теореме \ref{RUDgen1}. Следовательно, осталось доказать импликацию $(iii)\Rightarrow (iv)$.

Conversely, let us prove the implication (iii)$\Rightarrow$ (iv).  Since by assumption the set $\mathcal{A}$ has exact combinatorial dimension $\alpha$, then for some constant $C>0$ and each $n\in\mathbb{N}$ there is a set $\mathcal{B} _n:=B_1\times B_2\times\ldots \times B_d$ such that $|B_j|=n$, $j=1,2,\ldots,d$, and
$$
C^{-1}n^{\alpha}\leqslant |\mathcal{A}\cap \mathcal{B}_n|\leqslant Cn^{\alpha}.
$$
Thus, in view of \eqref{eq3} (with $\alpha$ instead of $\delta$) and condition  (iii), we have 
$$
\phi_{L_M}(2^{-dn})\leqslant Cn^{-\alpha}{\left\|\sum_{\jmath\in \mathcal{A}\cap\mathcal{B}_n}\mathbf{r}_\jmath\right\|}_{L_M}\leqslant Cn^{-\alpha}C_M\Big(\sum_{\jmath\in \mathcal{A}\cap\mathcal{B}_n} 1\Big)^{1/2}\leqslant C'n^{-\alpha/2}.
$$
Consequently, taking into account the quasiconcavity of $\phi_{L_M}$, we conclude that
$$
\phi_{L_M}(t)\leqslant C\log^{-\alpha/2}(e/t),\;\;t\in(0,1],
$$
with some constant $C$. Since $\phi_{L_M}(t)=1/M^{-1}(1/t)$ (see \S\,\ref{Symm}), the latter inequality implies
$$
\log^{\alpha/2}(e/t)\leqslant C M^{-1}(1/t),$$
or equivalently
$$
M(c\log^{\alpha/2}(e/t))\leqslant 1/t.
$$
As a result, after the change $c\log^{\alpha/2}(e/t)=u$, we arrive at the inequality
$$
M(u)\leqslant e^{(Cu)^{2/\alpha}-1}\quad\mbox{ for }u\geqslant 1.
$$
By the definition of the norm in Orlicz spaces (see \S\,\ref{Symm}), this implies immediately that $X\supset\mathrm{Exp}L^{2/\alpha}$, and the proof of (iv) is completed. 
\end{proof}

\section{Concluding remarks}

\subsection{On the RUC property of uniformly bounded Bessel systems in symmetric spaces}

According to Theorem \ref{RUDgen}, under certain conditions on density characteristics of an index set, the assumption that the corresponding subsystem of the Rademacher chaos has the RUD property in a symmetric space $X$ implies that $X$ is located "far"\:from the space $L_\infty$.
%if a subsystem of the Rademacher chaos has the RUD property in a symmetric space $X$, under certain conditions on density characteristics of its index set, $X$ is located  on the scale of symmetric spaces sufficiently "far"\:from the space $L_\infty$. 
In a certain sense, the opposite assertion is valid for the random unconditional convergence (RUC) property (see \S\,\ref{rud}) of such a subsystem. We obtain this result as a consequence of a more general statement related to uniformly bounded Bessel systems of functions. A similar assertion was proved, under extra conditions that $X\subset L_2$ and a system is orthonormal, in the paper \cite[Proposition~2.1]{dodds-semenov-sukochev} (see also \cite[Corollary~1.4]{BKPS}). 

Recall that a bounded basic sequence $\{x_j\}_{j\in\mathbb{N}}$ in a Banach space $X$ is called a {\it Bessel} system if for some constant $C(X)$ and any $ a_j\in\mathbb{R}$, $j\in\mathbb{N}$, we have
$$
\left(\sum_{j\in\mathbb{N}}a_j^2\right)^\frac{1}{2}\leqslant
C(X)\left\|\sum_{j\in\mathbb{N}} a_jx_j\right\|_X.
$$

\begin{prop}\label{RUC} Suppose $X$ is a symmetric space such that $\mathrm{Exp}L^2\subset X$. Then every uniformly bounded Bessel sequence $\{x_j\}_{j\in\mathbb{N}}$ has the RUC property in $X$.
\end{prop}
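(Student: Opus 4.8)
\textbf{Proof plan for Proposition \ref{RUC}.}

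The plan is to run the same chain of estimates as in Proposition \ref{OrlSq}, but in the reverse direction, exploiting the Bessel inequality in place of the Khintchine-type lower bound and using the upper Khintchine inequality in $\mathrm{Exp}L^2$ together with the ``averaging'' inequality \eqref{eq0} in the opposite sense. Concretely, fix $n$ and scalars $a_1,\dots,a_n$; we must bound $\int_0^1 \|\sum_j r_j(u)a_jx_j\|_X\,du$ from above by $D\|\sum_j a_jx_j\|_X$. First I would pass to the space $\mathrm{Exp}L^2$: by the embedding $\mathrm{Exp}L^2\subset X$ (which gives $\|\cdot\|_X\le C\|\cdot\|_{\mathrm{Exp}L^2}$) and then by Minkowski's integral inequality, $\int_0^1\|\sum_j r_j(u)a_jx_j\|_{\mathrm{Exp}L^2(\cdot)}\,du \le \big\|\int_0^1|\sum_j r_j(u)a_jx_j(\cdot)|\,du\big\|_{\mathrm{Exp}L^2(\cdot)}$ — actually the cleaner route is to apply the Khintchine inequality in $\mathrm{Exp}L^2$ pointwise in $t$ after interchanging, exactly as in Proposition \ref{OrlSq}, to reduce the left side to a multiple of $\mathrm{ess\,sup}_t(\sum_j (a_jx_j(t))^2)^{1/2}$; this is where uniform boundedness of $\{x_j\}$ enters, yielding the bound $C\sup_j\|x_j\|_\infty\,(\sum_j a_j^2)^{1/2}$.

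Next I would close the loop using the Bessel property: $(\sum_j a_j^2)^{1/2}\le C(X)\|\sum_j a_jx_j\|_X$. Chaining the two inequalities gives
$$
\int_0^1{\Big\|\sum_{j=1}^n r_j(u)a_jx_j\Big\|}_X\,du \le C\,C(X)\sup_{j}{\|x_j\|}_\infty\cdot{\Big\|\sum_{j=1}^n a_jx_j\Big\|}_X,
$$
which is precisely the RUC inequality with constant $D=C\,C(X)\sup_j\|x_j\|_\infty$. One technical point to handle carefully is that the Khintchine-in-$\mathrm{Exp}L^2$ step of Proposition \ref{OrlSq} used inequality \eqref{eq0} in the form ``average $\le$ ess sup''; here the direction is already favourable, since we want an upper bound on the average, so the same lemma applies verbatim and no reverse version is needed. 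A second point: one should note that the partial-sum sequences $\{x_j\}_{j=1}^n$ inherit the Bessel constant $C(X)$ uniformly in $n$, so the resulting RUC constant is independent of $n$ and of the coefficients, as required by the definition.

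The main obstacle, such as it is, lies in justifying the interchange of the Rademacher integration in $u$ with the $X$-norm (or $\mathrm{Exp}L^2$-norm) in $t$ and the pointwise application of Khintchine's inequality — i.e. reproducing the Fubini/measurability bookkeeping behind the four-line display in Proposition \ref{OrlSq} in the present direction. Since that argument only used the triangle inequality, Lemma-type inequality \eqref{eq0}, and the upper Khintchine bound in $\mathrm{Exp}L^2$ — all of which are symmetric with respect to which side we start from — the adaptation is routine, and the Bessel hypothesis supplies exactly the extra lower bound on $\|\sum_j a_jx_j\|_X$ that replaces the role played in Proposition \ref{OrlSq} by the RUD hypothesis. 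Thus the proof is essentially a transcription of the proof of Proposition \ref{OrlSq} with the roles of the two endpoint inequalities exchanged.
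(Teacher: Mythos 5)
Your proposal is correct and follows essentially the same route as the paper: the authors bound $\int_0^1\|\sum_j r_j(u)a_jx_j\|_X\,du$ by passing to $\mathrm{Exp}L^2$ via the embedding, applying inequality \eqref{eq0} and the Khintchine inequality in $\mathrm{Exp}L^2$ pointwise in $t$, using uniform boundedness to reach $C\sup_j\|x_j\|_\infty(\sum_j a_j^2)^{1/2}$, and then closing with the Bessel inequality — exactly the chain you describe. Your observation that \eqref{eq0} is used in the same (favourable) direction as in Proposition \ref{OrlSq}, with only the endpoint hypotheses exchanged, accurately reflects the paper's argument.
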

\begin{proof}

%Для любой измеримой функции $z=z(u,t)$ на $[0,1]\times[0,1]$ и любой функции Орлича $M$ справедливо неравенство (см. \cite[лемма 3]{AL_AA})
%
%\begin{equation}\label{eq0}
%\int\limits_0^1{\|z(u,\cdot)\|}_{\mathrm{L}_M(\cdot)}\,du\leqslant 2\;\mathrm{ess}\sup_{t\in[0,1]}{\|z(\cdot,t)\|}_{\mathrm{L}_M(\cdot)}.
%\end{equation}
By the conditions of the proposition, inequality \eqref{eq0} for $L_M=\mathrm{Exp}L^2$  and the Khintchine inequality in the space $\mathrm{Exp}L^2$ (see \cite[ Chapter~V, Theorem~8.7]{Zyg} or \cite{RS}) we get
\begin{eqnarray*}
\int\limits_0^1{\left\|\sum_{j\in\mathbb{N}}r_j(u)a_jx_j\right\|}_X\,du  
&\leqslant & C'\int\limits_0^1{\left\|\sum_{j\in\mathbb{N}}r_j(u)a_jx_j(\cdot)\right\|}_{\mathrm{Exp}L^2(\cdot)}\,du\\
&\leqslant & 2 C'\;\mathrm{ess}\sup_{t\in[0,1]}{\left\|\sum_{j\in\mathbb{N}}r_j(\cdot)a_jx_j(t)\right\|}_{\mathrm{Exp}L^2(\cdot)}\\
&\leqslant & C''\;\mathrm{ess}\sup_{t\in[0,1]} \left(\sum_{j\in\mathbb{N}}(a_jx_j(t))^2\right)^\frac{1}{2}\\
&\leqslant & C''\sup_{j\in\mathbb{N}}{\|x_j\|}_{\infty}\cdot\left(\sum_{j\in\mathbb{N}}a_j^2\right)^\frac{1}{2}\\
&\leqslant & C''C(X)\sup_{j\in\mathbb{N}}{\|x_j\|}_{\infty}\cdot \left\|\sum_{j\in\mathbb{N}} a_jx_j\right\|_X.
\end{eqnarray*}

\end{proof}

Since 
$\{\mathbf{r}_\jmath\}_{\jmath\in\bigtriangleup^d}$ is an uniformly bounded orthonormal sequence on $[0,1]$, the proposition \ref{RUC} implies the following consequence.

\begin{cor}
$\{\mathbf{r}_\jmath\}_{\jmath\in\bigtriangleup^d}$ is a RUC sequence in every symmetric space $X$ such that $\mathrm{Exp}L^2\subset X$.
\end{cor}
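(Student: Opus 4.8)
The plan is to deduce the statement directly from Proposition~\ref{RUC} by checking that the system $\{\mathbf{r}_\jmath\}_{\jmath\in\bigtriangleup^d}$ meets its hypotheses, namely that it is a \emph{uniformly bounded Bessel} basic sequence in $X$. Once this is verified, Proposition~\ref{RUC} applies verbatim (the embedding $\mathrm{Exp}L^2\subset X$ being assumed), and the RUC property follows.

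First I would record the elementary structural facts about a Rademacher chaos of fixed order $d$. Uniform boundedness is immediate: each $\mathbf{r}_\jmath=r_{j_1}\cdots r_{j_d}$ is a product of $\{\pm1\}$-valued functions, so $|\mathbf{r}_\jmath|\equiv 1$ on $[0,1]$ and $\sup_{\jmath\in\bigtriangleup^d}\|\mathbf{r}_\jmath\|_\infty=1$. That $\{\mathbf{r}_\jmath\}_{\jmath\in\bigtriangleup^d}$ is a basic sequence in any symmetric space (in the lexicographic order) has already been noted above, citing \cite[Theorem~2]{AL_AA}; since in this Corollary the numbering order does not matter, any order will do. Finally, since every multi-index in $\bigtriangleup^d$ has strictly decreasing coordinates, distinct $\jmath,\jmath'$ correspond to distinct $d$-element subsets of $\mathbb{N}$, so, using $r_j^2\equiv 1$, the product $\mathbf{r}_\jmath\mathbf{r}_{\jmath'}$ collapses to a nonempty product of distinct Rademacher functions, whose integral over $[0,1]$ vanishes; together with $\int_0^1\mathbf{r}_\jmath^2\,dt=1$ this shows the system is orthonormal in $L_2[0,1]$.

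It remains to establish the Bessel property in $X$, which is the only step carrying any content. For a finitely supported scalar sequence $\{a_\jmath\}$ orthonormality gives $\|\{a_\jmath\}\|_{\ell_2}=\bigl\|\sum_\jmath a_\jmath\mathbf{r}_\jmath\bigr\|_{L_2}$, but this is not yet enough, because the hypothesis $\mathrm{Exp}L^2\subset X$ does \emph{not} force $X\subset L_2$. The point is that a Rademacher chaos of fixed order $d$ obeys a reverse Hölder inequality $\|g\|_{L_2}\le C_d\|g\|_{L_1}$ (a standard consequence of hypercontractivity; alternatively one may quote \cite[Lemma~6]{AL_AA}, which already asserts the left-hand $\ell_2$-estimate in an arbitrary symmetric space). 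Since $X\subset L_1$ for every symmetric space, we get $\|\{a_\jmath\}\|_{\ell_2}\le C_d\bigl\|\sum_\jmath a_\jmath\mathbf{r}_\jmath\bigr\|_{L_1}\le C(X)\bigl\|\sum_\jmath a_\jmath\mathbf{r}_\jmath\bigr\|_X$, i.e. the Bessel condition holds. Combining uniform boundedness, basicity and this Bessel estimate, Proposition~\ref{RUC} yields that $\{\mathbf{r}_\jmath\}_{\jmath\in\bigtriangleup^d}$ is a RUC sequence in $X$. I expect no real obstacle here: the whole argument is bookkeeping, the only mildly substantive point being the Bessel estimate for general symmetric $X$ (rather than $X\subset L_2$), which is itself routine.
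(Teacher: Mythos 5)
Your proposal is correct and follows the same route as the paper: the corollary is obtained by verifying the hypotheses of Proposition~\ref{RUC} for the system $\{\mathbf{r}_\jmath\}_{\jmath\in\bigtriangleup^d}$. The paper disposes of this in one line by calling the chaos ``a uniformly bounded orthonormal sequence,'' whereas you explicitly bridge the gap from orthonormality to the Bessel property in an arbitrary symmetric $X$ (which need not sit inside $L_2$) via the reverse H\"older inequality $\|g\|_{L_2}\leqslant C_d\|g\|_{L_1}$ for chaos of order $d$ and the embedding $X\subset L_1$ --- a detail the paper leaves implicit but which your argument supplies correctly.
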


\subsection{Asymptotic independence of a fractional Rademacher chaos}

%\begin{examp}
Let $d=3$, $\mathcal{A}=\{(i,j,i+j),1\leqslant i<j\}$. Then, as one can easily see, $\mathcal{A}$ is a $(2,2)$-set, and therefore, by Theorem \ref{FractChaosOrl}, we have 
$$
{\left\|\sum_{\jmath\in \mathcal{A}}a_{\jmath}\mathbf{r}_\jmath\right\|}_{\mathrm{Exp}L}\asymp{\left\|\{a_{\jmath}\}_{\jmath\in \mathcal{A}}\right\|}_{\ell_2}
$$
and
%$$
%\sup_{{\left\|\{a_{\jmath}\}_{\jmath\in \mathcal{A}}\right\|}_{\ell_2}\leqslant 1}{\left\|\sum_{\jmath\in \mathcal{A}}a_{\jmath}\mathbf{r}_\jmath\right\|}_{\mathrm{Exp}L^\gamma}=\infty$$
$$
\sup\left\{\left\|\sum_{\jmath\in E}a_{\jmath}\mathbf{r}_\jmath\right\|_{\mathrm{Exp}L^\gamma}:\,\left\|\{a_{\jmath}\}_{\jmath\in E}\right\|_{\ell_2}\leqslant 1,\,E\subset \mathcal{A}\;\;\mbox{is finite}\;\;\right\}=\infty$$
for every $\gamma>1$. 
%{\color{red} Используя соотношение \eqref{description}, получаем
%$$
%\inf\bigl\{s:\,\sup\bigl\{\bigl\|\sum_{\jmath\in E}a_{\jmath}\mathbf{r}_\jmath\bigr\|_{p}p^{-s}:\,\left\|\{a_{\jmath}\}_{\jmath\in E}\right\|_{\ell_2}\leqslant 1,\,E\subset \mathcal{A}\;\;\mbox{конечно}\;\;\bigr\}<\infty\bigr\}=1.
%$$}
Moreover, if $\mathcal{A}_N:=\mathcal{A}\cap \{1,2,\dots,N\}^3$, where $N\in\mathbb{N}$, $N\geqslant 3$, then the sums
$$
S_N:=|\mathcal{A}_N|^{-1/2}\sum_{\jmath\in\mathcal{A}_N}\mathbf{r}_\jmath$$
are normed in $L_2$ and, by \cite[Theorem~1.5]{BleiJanson}, satisfy the equivalence 
$$
\sup_{N}\|S_N\|_p\asymp p,\;\;p\ge 1.$$
Consequently, \eqref{description} implies
$$
\inf\biggl\{\gamma:\,\sup_{N}{\|S_N\|}_{\mathrm{Exp}L^\gamma}=\infty\biggr\}=1.
$$
%\sup_{N}{\|S_N\|}_{\mathrm{Exp}L}<\infty,\;\mbox{но }\;\sup_{N}{\|S_N\|}_{\mathrm{Exp}L^\gamma}=\infty\quad\mbox{для всех }\gamma>1.
%$$}
%{\color{red}
%$$
%\inf\left\{r:\,\sup_{p\geqslant 1, N\geqslant 3}\frac{\|S_N\|_p}{p^r}<\infty\right\}=1.
%$$
%}

The latter relation can be considered as a consequence of a certain "interdependence"\:of the functions $\mathbf{r}_\jmath$, $\jmath\in \mathcal{A}$. In contrast to that, as we will see below, asymptotically the sums $S_N$ have the standard normal distribution, %(i.e., corresponding to the space $\mathrm{Exp}L^2\stackrel{\ne}{\subset} \mathrm{Exp}L$), 
and thus the functions $\mathbf{r}_\jmath$, $\jmath\in \mathcal{A}$, are asymptotically independent like the usual Rademacher functions. This "divergence"\;in estimates for the  moments of a Rademacher fractional chaos and its asymptotic behaviour was previously observed in the paper \cite{BleiJanson}. To justify the last assertion, we will use Theorem 1.7 from \cite{BleiJanson}.

%Для произвольного целого $N\geqslant 2$ определим множество $\mathcal{A}_N:=\mathcal{A}\cap \{1,2,\dots,N\}^3$ и сумму
%$$
%S_N:=|\mathcal{A}_N|^{-1/2}\sum_{\jmath\in\mathcal{A}_N}\mathbf{r}_\jmath.$$
Let 
$$
\mathcal{A}_{N,k}^*:=\{(i,j,m)\in \mathcal{A}_N:\,k\in \{i,j,m\}\},\;\;k\in\mathbb{N}.$$
Consider also the set $\mathcal{A}_N^\sharp\subset \mathcal{A}_N\times \mathcal{A}_N$ consisting of pairs $((i,j,i+j),(k,l ,k+l))$ of elements of the set $\mathcal{A}_N$ such that
\begin{equation}\label{nothing}
\{i,j,i+j\}\cap\{k,l,k+l\}=\varnothing
\end{equation}
 and
 \begin{equation}\label{nothing1}
\{i,j,i+j,k,l,k+l\}=\{i_1,j_1,i_1+j_1,k_1,l_1,k_1+l_1\}
\end{equation}
for some $(i_1,j_1,i_1+j_1),(k_1,l_1,k_1+l_1)\in \mathcal{A}_N$, satisfying the conditions
%{\color{blue}
%\begin{equation}\label{intersection}
%(i_1,j_1,i_1+j_1)\ne (i,j,i+j)\;\;\mbox{и}\;\; (k_1,l_1,k_1+l_1)\ne (k,l,k+l).
%\end{equation}
%{\color{red}
\begin{equation}\label{intersection}
(i_1,j_1,i_1+j_1)\ne (i,j,i+j)\;\;\mbox{and}\;\; (i_1,j_1,i_1+j_1)\ne (k,l,k+l).
\end{equation}

To prove that the sums $S_N$ have asymptotically the standard normal distribution, it suffices to check the following relations: 
$$
\lim_{N\to\infty}\max_{k} \frac{|\mathcal{A}_{N,k}^*|}{|\mathcal{A}_N|}=0$$
and
$$
\lim_{N\to\infty}\frac{|\mathcal{A}_N^\sharp|}{|\mathcal{A}_N|^2}=0$$
\cite[Theorem 1.7]{BleiJanson}. Observe that the first of these equalities is a consequence of the obvious estimates: $|\mathcal{A}_{N,k}^*|\leqslant 3N$ and $|\mathcal{A}_N|\asymp N^2$. To verify the second, it suffices to show that $\mathcal{A}_N^\sharp=\varnothing$.

%{\color{blue}
%Действительно, ввиду условия \eqref{intersection} можно исключить из рассмотрения ситуации, когда $i_1=i, j_1=j$ или $i_1=i, i_1+j_1=i+j$  или $j_1=j, i_1+j_1=i+j$ (и аналогично для "троек"\;$(k,l,k+l)$ и $(k_1,l_1,k_1+l_1)$). Поэтому в силу симметрии остаются следующие случаи: $a)\,i_1=i,j_1=i+j,i_1+j_1=l$, $b)\,i_1=i,j_1=i+j,i_1+j_1=k$ и $c)\,i_1=i,j_1=i+j,i_1+j_1=k+l$.

%$a)$ Тогда $l=2i+j>j$, откуда $j+k<l+k$. Поэтому числа $j,k,k+l$ не могут составить элемент множества $\mathcal{A}_N$. Это противоречит тому, что $\{k_1,l_1,k_1+l_1\}=\{j,k,k+l\}$.

%Случай  $b)$ рассматривается точно так же.

%$c)$ Тогда $\{k_1,l_1,k_1+l_1\}=\{j,k,l\}$. Так как $k+l=2i+j>j$, то $k_1=j, l_1=k,k_1+l_1=l$ или $k_1=k,l_1=j,k_1+l_1=l$. В обоих случаях $l=j+k$. Подставляя в равенство $k+l=2i+j$, получим $k=j$, что противоречит условию \eqref{nothing}.
%}

Assume that $((i,j,i+j),(k,l,k+l))\in\mathcal{A}_N^\sharp$, i.e., we have \eqref{nothing} as well as \eqref{nothing1} for some elements $(i_1,j_1,i_1+j_1),(k_1,l_1,k_1+l_1)\in \mathcal{A}_N$, satisfying the conditions \eqref{intersection}.
%окажем, что $\mathcal{A}_N^\sharp=\varnothing$. Для предполагаемой пары $((i,j,i+j),(k,l,k+l))\in\mathcal{A}_N^\sharp$ 
Let 
$$
S:=\{i,j,i+j,k,l,k+l\}=\{i_1,j_1,i_1+j_1,k_1,l_1,k_1+l_1\}.$$
%где элементы $(i_1,j_1,i_1+j_1),(k_1,l_1,k_1+l_1)\in \mathcal{A}_N$ удовлетворяют условиям \eqref{intersection}. 
Then, 
$$
\max\{x:\,x\in S\}=\max\{i+j,k+l\}=\max\{i_1+j_1,k_1+l_1\}
$$
and
%для суммы $\Sigma_{S}$ элементов этого множества 
%$S$ 
%выполняются равенства
$$
\Sigma_{S}=2(i+j+k+l)=2(i_1+j_1+k_1+l_1),
$$
where $\Sigma_{S}$ is the sum of all elements of the set  
$S$.
%и также
%$$
%\max\{x:\,x\in\mathcal{S}\}=\max\{i+j,k+l\}=\max\{i_1+j_1,k_1+l_1\}.
%$$
Therefore, either $i+j=i_1+j_1, k+l=k_1+l_1$, or $i+j=k_1+l_1, k+l=i_1+j_1$, whence
$$
\{i,j,k,l\}=\{i_1,j_1,k_1,l_1\}.
$$
By the assumption, the numbers $i,j,k,l,i+j,k+l$ are pairwise distinct (see \eqref{nothing}), so we have
$$ 
i+k\ne i+j,\;\; i+l\ne i+j,\;\; j+k\ne i+j,\;\; j+l\ne i+j,\;\; k+l\ne i+j.
$$
But then the equality $i_1+j_1=i+j$ implies $i_1=i, j_1=j$, which contradicts \eqref{intersection}. Similarly, from the equality $i_1+j_1=k+l$ it follows $i_1=k, j_1=l$, which also contradicts \eqref{intersection}. 
%\end{examp}

\newpage

\end{document}